\numberwithin{equation}{section}
\theoremstyle{plain}
\newtheorem{theorem}[equation]{Theorem}
\newtheorem{lemma}[equation]{Lemma}
\newtheorem{prop}[equation]{Proposition}
\newtheorem{corollary}[equation]{Corollary}
\theoremstyle{definition}
\newtheorem{definition}[equation]{Definition}
\newtheorem{example}[equation]{Example}
\newtheorem{remark}[equation]{Remark}
\newcommand{\C}{\mathbb{C}}
\newcommand{\G}{\mathbb{G}}
\newcommand{\abs}[1]{\lvert#1\rvert}
\newcommand{\Line}{\mathbb{P}^1}
\DeclareMathOperator{\Gal}{Gal}
\DeclareMathOperator{\core}{core}
\DeclareMathOperator{\Sym}{Sym}
\DeclareMathOperator{\Mon}{Mon}
\DeclareMathOperator{\charp}{char}
\DeclareMathOperator{\Aut}{Aut}
\renewcommand{\bar}[1]{#1\llap{$\overline{\phantom{\rm#1}}$}}
\title{On factorizations of maps between curves}
\author{Dijana Kreso}
\address{
Institut f\"ur Analysis und Computational Number Theory (Math A)\\
Technische Universit\"at Graz\\ Steyrergasse 30/II\\
8010 Graz, Austria
}
\email{kreso@math.tugraz.at}
\author{Michael E. Zieve}
\address{
  Department of Mathematics,
  University of Michigan,
  Ann Arbor, MI 48109--1043,
  USA
}
\address{Mathematical Sciences Center, Tsinghua University, Beijing 100084, China}
\email{zieve@umich.edu}
\urladdr{www.math.lsa.umich.edu/$\sim$zieve/}
\date{}
\begin{document}

\begin{abstract}
We examine the different ways of writing a cover of curves $\phi\colon C\to D$ over a field $K$ as a composition
$\phi=\phi_n\circ\phi_{n-1}\circ\dots\circ\phi_1$, where each $\phi_i$ is a cover of curves over $K$ of degree at least $2$
which cannot be written as the composition of two lower-degree covers.  We show that if the monodromy group $\Mon(\phi)$
has a transitive abelian subgroup then the sequence $(\deg\phi_i)_{1\le i\le n}$ is uniquely determined up to permutation
by $\phi$, so in particular the length $n$ is uniquely determined.  We prove analogous conclusions for the sequences
$(\Mon(\phi_i))_{1\le i\le n}$ and $(\Aut(\phi_i))_{1\le i\le n}$.
Such a transitive abelian subgroup exists in particular when $\phi$ is 
tamely and totally ramified over some point in $D(\bar{K})$, and also when $\phi$ is a morphism of one-dimensional
algebraic groups (or a coordinate projection of such a morphism).  Thus, for example,
our results apply to decompositions of
polynomials of degree not divisible by $\charp(K)$, additive polynomials, elliptic curve isogenies, and Latt\`es maps.
\end{abstract}

\thanks{The first author was supported by the Austrian Science Fund (FWF) W1230-N13 and NAWI Graz.
 The second author thanks the NSF for support under grant DMS-1162181.}

\maketitle


\section{Introduction}

Let $\phi\colon C\to D$ be a cover of curves over a field $K$, which in this paper means a
nonconstant separable morphism between nonsingular, projective, geometrically irreducible
curves where $\phi$, $C$ and $D$ are all defined over $K$.  We will examine decompositions of $\phi$, namely expressions
$\phi=\phi_n\circ\phi_{n-1}\circ\dots\circ\phi_1$ where each $\phi_i\colon C_{i-1}\to C_i$ is a cover of curves
over $K$ with $\deg(\phi_i)\ge �2$ (so that $C_0=C$ and $C_n=D$).  Of special importance are
\emph{complete decompositions}, which are decompositions in which no $\phi_i$ can be
written as the composition of lower-degree covers.  These are the analogues of ``prime factorizations" in the context
of maps between curves.  Based on this analogy, it is natural to ask whether a given map $\phi$ has essentially just
one complete decomposition, but this turns out to be too much to hope for in general.  As a substitute, we study properties
which are shared by all complete decompositions of a given cover $\phi$.

The statements of our results involve the \emph{monodromy group} $\Mon(\phi)$ of $\phi$, which by definition is the
Galois group of (the Galois closure of) the corresponding function field extension $K(C)/K(D)$, viewed as a permutation group.  We show that
if $\Mon(\phi)$ has a transitive abelian subgroup then the sequences $(\Mon(\phi_i))_{1\le i\le n}$ and
$(\deg\phi_i)_{1\le i\le n}$ are uniquely determined (up to permutation) by $\phi$.  We prove a similar conclusion about the
sequence $(\Aut(\phi_i))_{1\le i\le n}$ of automorphism groups of the $\phi_i$'s, where by definition $\Aut(\phi_i)$ is the
group of automorphisms $\mu$ of $C_{i-1}$ defined over $K$ which satisfy $\phi_i\circ\mu=\phi_i$.  In fact we obtain these conclusions in a slightly
more general situation, as follows.

\begin{theorem} \label{mainthm}
Let $\phi\colon C\to D$ be a cover of curves over a field $K$, and let $\phi=\phi_n\circ\phi_{n-1}\circ\dots\circ\phi_1$
and $\phi=\psi_m\circ\psi_{m-1}\circ\dots\circ\psi_1$ be complete decompositions of $\phi$.  If $\Mon(\phi)$ has a transitive
quasi-Hamiltonian subgroup then $m=n$ and there is a permutation $\pi$ of $\{1,2,\dots,n\}$ such that, for each $i$ with $1\le i\le n$,
we have $\deg\phi_i=\deg\psi_{\pi(i)}$ and $\Aut(\phi_i)\cong\Aut(\psi_{\pi(i)})$.
If $\Mon(\phi)$ has a transitive Dedekind subgroup then we may choose $\pi$ so that in addition
$\Mon(\phi_i)\cong\Mon(\psi_{\pi(i)})$ for each $i$.
\end{theorem}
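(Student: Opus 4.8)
The plan is to recast the statement as a comparison of two maximal chains in a subgroup lattice and then run a Jordan--H\"older argument, the substantive point being that the transitive quasi-Hamiltonian (resp.\ Dedekind) subgroup simultaneously forces that lattice to be modular and pins down the invariants attached to the individual factors. First I would set up the Galois dictionary. Write $G=\Mon(\phi)$, realized as a faithful transitive permutation group on $\Omega=G/H$, where $H$ is a point stabilizer (so $\core_G(H)=1$). Applying the Galois correspondence inside the Galois closure, decompositions of $\phi$ correspond to chains of intermediate groups, and \emph{complete} decompositions correspond to \emph{maximal} chains $H=H_0<H_1<\dots<H_n=G$ in the interval $[H,G]$ of the subgroup lattice of $G$. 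Under this dictionary $\deg\phi_i=[H_i:H_{i-1}]$, $\Aut(\phi_i)\cong N_{H_i}(H_{i-1})/H_{i-1}$, and $\Mon(\phi_i)\cong H_i/\core_{H_i}(H_{i-1})$ acting on $H_i/H_{i-1}$. Thus the theorem becomes a group-theoretic assertion comparing two maximal chains in $[H,G]$.

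Next I would prove that $[H,G]$ is modular. Let $A\le G$ be a transitive quasi-Hamiltonian subgroup, so $G=AH$, and set $\tilde X:=A\cap X$ and $B:=A\cap H$. Since $G=AH$ one checks $X=\tilde X H$, so $X\mapsto\tilde X$ is injective and plainly preserves intersections. It also preserves joins: a cardinality count (using $\tilde Y H=Y$ and $\tilde Y\cap H=B$) gives $H\tilde Y=\tilde Y H=Y$, whence $XY=\tilde X\tilde Y H$, and because $A$ is quasi-Hamiltonian the product $\tilde X\tilde Y=\langle\tilde X,\tilde Y\rangle$ is a subgroup; thus $\langle X,Y\rangle=XY=\langle\tilde X,\tilde Y\rangle H$ and $\widetilde{\langle X,Y\rangle}=\langle\tilde X,\tilde Y\rangle$. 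Hence $X\mapsto\tilde X$ is a lattice isomorphism from $[H,G]$ onto the sublattice $\mathcal I:=\{\,\tilde X:X\in[H,G]\,\}$ of the subgroup lattice $L(A)$. As $A$ is quasi-Hamiltonian, $L(A)$ is modular, so its sublattice $[H,G]$ is modular as well; moreover $\deg\phi_i=[\tilde H_i:\tilde H_{i-1}]$.

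I would then invoke the Jordan--H\"older (Jordan--Dedekind) theorem for modular lattices: any two maximal chains in $[H,G]$ have the same length (so $m=n$), and there is a bijection $\pi$ matching each prime quotient $[H_{i-1},H_i]$ of the first chain with a \emph{projective} prime quotient of the second. Projectivity is generated by transpositions, and for a transposition $[U,V]\nearrow[U',V']$ the product formula $\abs{UV}=\abs U\,\abs V/\abs{U\cap V}$ in $A$ (valid since subgroups of $A$ permute) shows the index is unchanged; this already yields $\deg\phi_i=\deg\psi_{\pi(i)}$.

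Finally, and this is where the main work lies, I would show that a single transposition $[H_{i-1},H_i]\nearrow[H_{j-1}',H_j']$ (so $H_{i-1}=H_i\cap H_{j-1}'$ and $H_j'=H_iH_{j-1}'$) induces $\Aut(\phi_i)\cong\Aut(\psi_j)$, and, when $A$ is Dedekind, also $\Mon(\phi_i)\cong\Mon(\psi_j)$. The point to watch is that $\Aut$ and $\Mon$ are computed \emph{inside $G$}, via normalizers and cores, so they are \emph{not} transported by the isomorphism $[H,G]\cong\mathcal I$. In coset-geometric terms the transposition exhibits $\phi_i$ as a base change of $\psi_j$: the $H_i$-set isomorphism $H_i/H_{i-1}\cong H_j'/H_{j-1}'$ coming from $H_j'=H_iH_{j-1}'$ and $H_{i-1}=H_i\cap H_{j-1}'$ produces an embedding $\Mon(\phi_i)\hookrightarrow\Mon(\psi_j)$ and, dually, an embedding $\Aut(\psi_j)\hookrightarrow\Aut(\phi_i)$. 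The whole difficulty is to upgrade these embeddings to isomorphisms, i.e.\ to prove that the relevant normalizers and cores have matching orders at each elementary transposition. I expect the quasi-Hamiltonian hypothesis to force the $\Aut$-embedding to be onto, giving the first assertion, while the stronger Dedekind hypothesis---normality of every subgroup of $A$---should additionally force the $\Mon$-embedding to be onto through the linear-disjointness identity $H_i\cdot\core_{H_j'}(H_{j-1}')=H_j'$, so that no monodromy is lost under base change. This last equality can genuinely fail for quasi-Hamiltonian but non-Dedekind $A$, which is precisely why the monodromy conclusion is claimed only in the Dedekind case. Concatenating the matched transpositions along the Jordan--H\"older projectivity then delivers a permutation $\pi$ with all the stated properties.
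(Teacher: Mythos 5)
Your reduction is sound as far as it goes: the Galois dictionary, the lattice embedding $X\mapsto X\cap A$ onto a sublattice of the subgroup lattice of $A$, the resulting modularity of the interval $[H,G]$, and the Jordan--Dedekind chain argument together with the product formula do correctly yield $m=n$ and the matching of degrees. This part is a legitimate (somewhat more abstract) repackaging of the paper's Lemma~\ref{chains} and Theorem~\ref{RittFirst}, and your identifications $\Aut(\phi_i)\cong N_{H_i}(H_{i-1})/H_{i-1}$ and $\Mon(\phi_i)\cong H_i/\core_{H_i}(H_{i-1})$, as well as the two embeddings attached to a perspectivity, are all correct. But the proof stops exactly where the theorem begins: the assertions that the quasi-Hamiltonian hypothesis ``forces the $\Aut$-embedding to be onto'' and that the Dedekind hypothesis ``should force'' $H_i\cdot\core_{H_j'}(H_{j-1}')=H_j'$ are stated as expectations, with no argument. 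Those two claims are the entire content of Sections~\ref{Sec:Mon} and~\ref{Sec:BNg} of the paper, and neither follows from modularity or from any order count you have set up.

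Concretely, here is what is missing. For the monodromy statement, the paper first proves a dichotomy (Lemma~\ref{MonLemma}): at a perspectivity with chains $G>U>H$ and $G>V>H$, either the natural map $V/\core_V(H)\to G/\core_G(U)$ is an isomorphism of permutation groups, or $\core_G(U)=\core_V(H)=\core_G(H)=1$; the Dedekind hypothesis then kills the exceptional case via $\core_G(U)=\bigcap_{g\in A}U^g\ge\core_A(U\cap A)=U\cap A$, which would give $U=H(U\cap A)\le H$, a contradiction (Proposition~\ref{MonProp}). For the automorphism statement, the exceptional case \emph{cannot} be excluded under the quasi-Hamiltonian hypothesis alone --- the paper explicitly records as an open question whether monodromy invariance holds in that setting --- so your plan of deducing surjectivity of the $\Aut$-embedding by matching normalizer orders through the $\Mon$-embedding cannot succeed as stated. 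What the paper does instead is show that in the exceptional case \emph{both} automorphism groups are trivial, and this requires two inputs absent from your proposal: Corollary~\ref{gamma.Ind} (an indecomposable cover has nontrivial automorphism group only if its monodromy group is regular, equivalently cyclic of prime order) and Ore's theorem that every maximal proper subgroup of a quasi-Hamiltonian group is normal. Without these, your argument proves Ritt's First Theorem (equal length and degrees) but not the $\Aut$ or $\Mon$ conclusions of Theorem~\ref{mainthm}.
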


Here a \emph{Dedekind group} is a group $A$ such that every subgroup of $A$ is normal, and a
\emph{quasi-Hamiltonian group} is a group $A$ such that $IJ=JI$ for all subgroups $I,J$ of $A$.
Note that all abelian groups are Dedekind groups, and all Dedekind groups are quasi-Hamiltonian.
Dedekind~\cite{Dedekind} showed that the nonabelian finite Dedekind groups are precisely the direct products
of the order-$8$ quaternion group with an abelian group containing no elements of order $4$. 
Iwasawa~\cite{Iwa41} gave a similar classification of nonabelian quasi-Hamiltonian groups.

We also prove the following structural result about the automorphism group of a composition of covers.

\begin{theorem}\label{GS2intro}
Let $\theta\colon C_1\to D$ and $\rho\colon C\to C_1$ be
covers of curves over a field $K$, and assume that the monodromy group of $\psi:=\theta\circ\rho$ has a
transitive Dedekind subgroup.
Then, for each $\mu\in\Aut(\psi)$, there is a unique $\nu\in\Aut(\theta)$ for which
$\rho\circ\mu=\nu\circ\rho$.  Moreover, the map $\mu\mapsto\nu$ defines a homomorphism
$\Aut(\psi)\to\Aut(\theta)$ with kernel $\Aut(\rho)$.
\end{theorem}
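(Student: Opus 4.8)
The plan is to translate everything into the Galois theory of function fields and then reduce the only nontrivial point to a statement about normalizers. Let $N$ be the Galois closure of $K(C)/K(D)$, put $G=\Gal(N/K(D))=\Mon(\psi)$, and set $H=\Gal(N/K(C))$ and $U=\Gal(N/K(C_1))$, so that $H\le U\le G$ and $G$ acts on $G/H$. Under the standard dictionary the group of $K(D)$-automorphisms of $K(C)$ is $N_G(H)/H$; thus $\Aut(\psi)\cong N_G(H)/H$, $\Aut(\theta)\cong N_G(U)/U$, and $\Aut(\rho)\cong N_U(H)/H$. The uniqueness of $\nu$ is immediate and uses no hypothesis: if $\nu_1\circ\rho=\nu_2\circ\rho$, then since $\rho$ is dominant $\rho^*$ is injective, so $\nu_1^*=\nu_2^*$ on $K(C_1)$ and hence $\nu_1=\nu_2$.

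For existence, note that $\mu\in\Aut(\psi)$ descends through $\rho$ (i.e.\ some $\nu$ satisfies $\rho\circ\mu=\nu\circ\rho$) exactly when the automorphism $\mu^*$ of $K(C)$ carries the subfield $K(C_1)$ into itself; in that case $\nu^*:=\mu^*|_{K(C_1)}$ fixes $K(D)$ pointwise, so $\nu\in\Aut(\theta)$ automatically, and $\nu$ is defined over $K$ because $\mu^*$ is. Representing $\mu$ by $g\in N_G(H)$, the condition $\mu^*(K(C_1))=K(C_1)$ reads $gUg^{-1}=U$, so the entire existence claim reduces to the containment $N_G(H)\le N_G(U)$. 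To prove this I would invoke the transitive Dedekind subgroup $A\le G$. Transitivity on $G/H$ gives $G=AH$, so any $g\in N_G(H)$ factors as $g=ah$ with $a\in A$, $h\in H$; since $H\le N_G(H)$ we get $a\in A\cap N_G(H)$, and as $H\le U\le N_G(U)$ it suffices to show $a\in N_G(U)$. From $G=AH$ one checks that the intermediate subgroup has the form $U=(A\cap U)H$ (any $u=a'h'\in U$ with $h'\in H$ forces $a'=u(h')^{-1}\in A\cap U$). Writing $A_1:=A\cap U$, the Dedekind hypothesis yields $A_1\trianglelefteq A$, hence $aA_1a^{-1}=A_1$; combined with $aHa^{-1}=H$ this gives
\[
aUa^{-1}=a A_1 H a^{-1}=(aA_1a^{-1})(aHa^{-1})=A_1H=U,
\]
so $a\in N_G(U)$, as needed.

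Finally, $\mu\mapsto\nu$ is a homomorphism by uniqueness: from $\rho\circ\mu_i=\nu_i\circ\rho$ one gets $\rho\circ(\mu_1\circ\mu_2)=\nu_1\circ\nu_2\circ\rho$, whence $\mu_1\mu_2\mapsto\nu_1\nu_2$. Its kernel is $\{\mu\in\Aut(\psi):\rho\circ\mu=\rho\}$, which equals $\Aut(\rho)$ (and $\Aut(\rho)\subseteq\Aut(\psi)$ automatically, since $\rho\circ\mu=\rho$ forces $\psi\circ\mu=\psi$). The only genuine obstacle is the normalizer containment $N_G(H)\le N_G(U)$; everything else is formal. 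Its crux is the structural fact that every intermediate subgroup has the shape $(A\cap U)H$ together with the normality $A\cap U\trianglelefteq A$, and this is precisely where the Dedekind hypothesis is used — the step at which a merely quasi-Hamiltonian $A$ would not obviously suffice, explaining why the sharper conclusion on $\Mon(\phi_i)$ in Theorem~\ref{mainthm} requires the Dedekind assumption.
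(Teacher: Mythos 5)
Your proof is correct, and its group-theoretic core coincides with the paper's: the containment $N_G(H)\le N_G(U)$ is established in the paper's proof of Proposition~\ref{GS2} by exactly your computation, phrased there as ``$J:=N_G(H)\cap A$ normalizes $J_1:=U\cap A$ (because $A$ is Dedekind), hence normalizes $U=HJ_1$, hence $N_G(H)=HJ\le N_G(U)$''; your factorizations $N_G(H)=(A\cap N_G(H))H$ and $U=(A\cap U)H$ are the same use of $G=AH$ that appears in Lemma~\ref{G1}. Where you genuinely differ is in how existence of $\nu$ is extracted from that containment. The paper proceeds geometrically: it forms the group $N_G(H)U$ (a group precisely because $N_G(H)$ normalizes $U$), takes the corresponding Galois subcover $\theta_1\colon C_1\to E$ of $\theta$, and obtains $\nu$ as the component of the fibered product $C_1\times_E C_1$ containing the image of $P\mapsto(\rho\circ\mu(P),\rho(P))$, invoking Lemmas~\ref{symlem} and~\ref{fiber}; this has the side benefit of showing that the homomorphism $\Aut(\psi)\to\Aut(\theta)$ actually lands inside $\Aut(\theta_1)$ for a Galois subcover $\theta_1$. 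You instead descend purely field-theoretically: representing $\mu$ by $g\in N_G(H)$, the containment $g\in N_G(U)$ says that $\mu^*$ maps the fixed field of $U$ (namely $K(C_1)$) onto itself, so $\nu^*:=\mu^*|_{K(C_1)}$ does the job. Your finish is shorter and avoids both the auxiliary decomposition of $\theta$ and the fibered-product lemma, at the modest cost of not exhibiting the finer factorization through $\Aut(\theta_1)$; the uniqueness, homomorphism, and kernel arguments are the same formal verifications in both treatments.
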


As a consequence, we show in Theorem~\ref{ThmGS} that if $\phi=\phi_n\circ\phi_{n-1}\circ\dots\circ\phi_1$
then
\[
\lvert\Aut(\phi)\rvert\, \text{ divides }\,\prod_{i=1}^n\lvert\Aut(\phi_i)\rvert.
\]

We now present several classes of covers of curves which satisfy all the conclusions of the above results.
For this, it suffices to exhibit covers whose monodromy group contains a transitive abelian subgroup.
For instance, if $\phi\colon C\to D$ is totally and tamely ramified over some point 
$P\in D(\bar{K})$, then the inertia group at any point over $P$ on the Galois closure of $\phi$ 
will be a transitive cyclic subgroup of $\Mon(\phi)$.  This includes the classical case of complex polynomials, for 
which Ritt~\cite{R22} proved in 1922 that any two complete decompositions have the same length and the same collection
of degrees of the involved indecomposable polynomials.  We discuss the case of polynomials further in Section~\ref{secpoly}.
Covers of curves with a totally and tamely ramified point have arisen in other contexts as well, most recently as a distinguished
class of ``origami", meaning covers of a complex elliptic curve having a unique branch point \cite{RS}.

Our results also apply to any cover $\phi\colon C\to D$ which is the projective closure of a nonconstant separable
morphism $\phi_0\colon C_0\to D_0$
of connected one-dimensional algebraic groups.  The reason is that in this situation the transitive subgroup
$\Gal(\bar{K}(C)/\bar{K}(D))$ of $\Mon(\phi)$ is isomorphic to the kernel of $\phi_0$, and hence is abelian because
every one-dimensional algebraic group is abelian.  In case $C_0\cong D_0\cong\G_a$, the morphism $\phi$ is
an \emph{additive polynomial} $\sum_{i=0}^r a_i X^{p^i}$, where $a_i\in K$ and $p:=\charp(K)$.  Decompositions of
additive polynomials feature prominently in work on the Carlitz module and more general Drinfeld modules, see \cite{G96}.
Such decompositions were originally studied in 1933 by Ore, who proved in \cite[Thm.~4 of Chap.~2]{Ore} that any
two complete decompositions of an additive polynomial \emph{into additive polynomials}
have the same length and the same collection of degrees of the involved indecomposable polynomials.  It was shown
later that every decomposition of an additive polynomial into arbitrary polynomials is equivalent to a decomposition into
additive polynomials~\cite[Thm.~4]{DW74}, so that Ore's result strongly resembles Ritt's.  The present paper is the
first to explain this resemblance, by proving a common generalization of these two results.  Another class
of morphisms of one-dimensional algebraic groups consists of isogenies between elliptic curves.  In this case, all
portions of our results are new.

Finally, our results apply to any cover $\phi\colon C\to D$ which is a coordinate projection of a
morphism $\hat\phi\colon \hat{C}\to \hat{D}$ of one-dimensional algebraic groups.  This means that there exist nonconstant
morphisms $\pi_1\colon\hat{C}\to C$ and $\pi_2\colon\hat{D}\to D$ for which $\pi_2\circ\hat\phi=\phi\circ\pi_1$.  It was
shown in \cite{GhiocaZ} that $\Mon(\phi)$ has a transitive abelian subgroup in this situation.  This case includes the
\emph{subadditive polynomials} $S(X)\in K[X]$, which are characterized by the property that there is a positive integer $n$
for which $S(X^n)=L(X)^n$ for some additive polynomial $L(X)$.  In particular this proves the assertion from \cite[p.~325]{CHH}
that any two decompositions of a subadditive polynomial have the same length and the same degrees of the indecomposables.
Our results also apply to coordinate projections of isogenies of elliptic curves, which play a prominent role in the finite fields
literature \cite{GMS,Mullerec}; in case the isogeny is an endomorphism, such coordinate projections are called \emph{Latt\`es maps}
and play a crucial role in complex dynamics \cite{Milnor}.

Since the transitive abelian subgroup in each of the above cases is a subgroup of the geometric monodromy group of $\phi$ -- that is,
the monodromy group of the base extension of $\phi$ to a morphism of curves over $\bar{K}$ -- it follows that our results also
apply to any cover which becomes isomorphic to one of the above covers after base extension to $\bar{K}$.  For instance, this
includes decompositions of Dickson polynomials \cite{LMT}, which are quadratic twists of Chebyshev polynomials (which in turn are coordinate
projections of the multiplication-by-$d$ endomorphism of $\G_m$). It also includes R\'edei functions \cite{Redei}, which are rational functions
inducing covers $\Line\to\Line$ that become isomorphic to $X^d$ over a quadratic extension of $K$.

In the development that follows, we also prove several other results about decompositions.  In some cases we give simpler proofs
(in greater generality) of results from previous papers: for instance one can compare Corollary~\ref{corfin} and Remark~\ref{sillyrem}, or
Lemma~\ref{symlem} and the last paragraph of Section~\ref{Sec:BNg}.  Also in Remark~\ref{finalsay} we disprove a conjecture from
\cite{GS06}.  These improvements on previous work are made possible in part by our generalization to the framework of covers of curves, which
provides a valuable perspective even when one is only interested in
questions about polynomials.

This paper is organized as follows.  In the next section we explain the connection between monodromy groups and
decompositions of a cover.  In Section~\ref{secpoly} we expand on this
connection in the much-studied case of decompositions of polynomials.  In Sections~\ref{Sec:Quasi}, \ref{Sec:Mon} and \ref{Sec:BNg}
 we prove the portions of
Theorem~\ref{mainthm} pertaining to degrees, monodromy groups, and automorphism groups, respectively.  We conclude in
Section~\ref{secdiv} by proving Theorem~\ref{GS2intro}.


\section{Decomposition of covers via monodromy groups} \label{Sec:Prelim}

In this section we translate the problem of analyzing decompositions of a cover of curves
into the problem of analyzing chains of subgroups of its monodromy group, which we then
reformulate as analyzing chains of certain types of subgroups of a
transitive subgroup of this monodromy group.

We first introduce the terminology we will use in the paper.

\begin{definition}A \emph{curve} over a field $K$ is a nonsingular, projective, geometrically
irreducible one-dimensional variety defined over $K$.
\end{definition}

\begin{definition}A \emph{cover} of curves over a field $K$ is a nonconstant separable morphism
between curves over $K$.
\end{definition}

\begin{definition}A cover of curves over $K$ is \emph{decomposable} if it can be written
as the composition of two covers (of curves over $K$) which both have degree at least $2$.  A cover is
\emph{indecomposable} if its degree is at least $2$ and it is not decomposable.
\end{definition}

\begin{definition}A \emph{decomposition} of a cover $\phi\colon C\to D$ over $K$ is an expression
$\phi=\phi_n\circ\phi_{n-1}\circ\dots\circ\phi_1$ where each $\phi_i\colon C_{i-1}\to C_i$ is a cover
(of curves over $K$) of degree at least $2$.  Such a decomposition is a
 \emph{complete decomposition} if every $\phi_i$ is indecomposable.
\end{definition}

\begin{definition} Let 
$\phi=\phi_n\circ\phi_{n-1}\circ\dots\circ\phi_1=\psi_m\circ
\psi_{m-1}\circ\dots\circ\psi_1$ be two decompositions of a cover $\phi\colon C\to D$, where
$\phi_i\colon C_{i-1}\to C_i$ and $\psi_i\colon B_{i-1}\to B_i$ (and $C_0=B_0=C$
and $C_n=B_n=D$).  We call these decompositions \emph{equivalent}
if $m=n$ and there are isomorphisms $\rho_i\colon C_i\to B_i$ such that 
$\rho_0\colon C\to C$ and $\rho_n\colon D\to D$ are the identity maps and
$\psi_i\circ \rho_{i-1}=\rho_i\circ\phi_i$ for $1\le i\le n$.
\end{definition}

\begin{definition}The \emph{monodromy group} $\Mon(\phi)$ of a cover $\phi\colon C\to D$ of
curves over $K$
is the Galois group of the Galois closure of the extension of function fields $K(C)/K(D)$.
\end{definition}

We view $\Mon(\phi)$ as a group of permutations of the set of embeddings of
$K(C)$ into a fixed algebraic closure of $K(D)$ which restrict to the identity map on $K(D)$.
The number $d$ of such embeddings is $[K(C):K(D)]=\deg\phi$, so that $\Mon(\phi)$ is a
subgroup of $S_d$, and further $\Mon(\phi)$ is transitive.

\begin{example}
We illustrate the above notions in the special case of covers $\phi\colon\Line\to\Line$:
upon choosing coordinates on both copies of $\Line$, we see that such a cover is the same thing as
a rational function $f(X)\in K(X)$ with nonzero derivative, or equivalently an element of
$K(X)\setminus K(X^p)$ where $p:=\charp(K)$.  Then two decompositions
$f=f_n\circ f_{n-1}\circ\dots\circ f_1$ and $f=g_m\circ g_{m-1}\circ\dots\circ g_1$ are equivalent if $m=n$
and there are degree-one $\mu_i\in K(X)$ (for $0\le i\le n$) such that $\mu_0=\mu_n=X$ and
$g_i\circ\mu_{i-1}=\mu_i\circ f_i$ for $1\le i\le n$.  In this case $\Mon(\phi)$ is the Galois group of
the numerator of $f(X)-t$ over $K(t)$, where $t$ is transcendental over $K$.
\end{example}

Having defined our terminology, we now state our first result.

\begin{lemma}\label{Decr.Incr}
Let $K$ be a field and let $\phi\colon C\to D$ be a cover of curves over $K$.
Let $G$ be the monodromy group of $\phi$, let $H$ be a one-point stabilizer in $G$, and let $A$
be a transitive subgroup of $G$.  There are bijections between the following sets:
\begin{enumerate}
\item[(1)] the set of equivalence classes of decompositions of $\phi$,
\item[(2)] the set of increasing chains of fields between $K(D)$ and $K(C)$,
\item[(3)] the set of decreasing chains of groups between $G$ and $H$,
\item[(4)] the set of decreasing chains of groups between $A$ and $H\cap A$ consisting of groups $J$
for which $JH=HJ$.
\end{enumerate}
Moreover, these bijections can be chosen so that the degrees of the indecomposable covers in a
decomposition in \emph{(1)}
equal the indices between successive groups in the corresponding chain in each of \emph{(3)} and \emph{(4)}.
\end{lemma}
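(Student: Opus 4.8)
The plan is to build the three ``adjacent'' bijections (1)$\leftrightarrow$(2), (2)$\leftrightarrow$(3), and (3)$\leftrightarrow$(4) separately, tracking degrees and indices through each. The correspondence (1)$\leftrightarrow$(2) rests on the classical anti-equivalence between curves over $K$ and their function fields: a cover $C\to D$ is the same datum as an inclusion $K(D)\hookrightarrow K(C)$, isomorphisms of curves correspond to $K$-isomorphisms of function fields, and degree of a cover equals degree of the corresponding field extension. Given a decomposition $\phi=\phi_n\circ\dots\circ\phi_1$ with $\phi_i\colon C_{i-1}\to C_i$, pulling back along $\phi_1^*\circ\dots\circ\phi_i^*$ embeds each $K(C_i)$ into $K(C)$, producing the chain $K(D)=K(C_n)\subseteq\dots\subseteq K(C_0)=K(C)$, which is strictly increasing since each $\deg\phi_i\ge 2$. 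I would then check that two decompositions are equivalent precisely when they yield the same chain of subfields of $K(C)$: the isomorphisms $\rho_i$ from the definition of equivalence translate into compatible identifications $\rho_i^*$ satisfying $v_i=u_i\circ\rho_i^*$, where $u_i,v_i$ denote the two embeddings of the $i$-th intermediate field into $K(C)$, so the images agree; conversely the anti-equivalence reconstructs the $\rho_i$ from an equality of chains.

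For (2)$\leftrightarrow$(3) I would pass to the Galois closure $\widetilde{L}$ of the separable extension $K(C)/K(D)$, so that $G=\Gal(\widetilde{L}/K(D))$ and $H=\Gal(\widetilde{L}/K(C))$. The inclusion-reversing Galois correspondence $L\mapsto\Gal(\widetilde{L}/L)$ gives a bijection between fields between $K(D)$ and $K(C)$ and groups between $H$ and $G$, sending increasing chains to decreasing chains and sending each $[K(C_{i-1}):K(C_i)]$ to the index $[\Gal(\widetilde{L}/K(C_{i-1})):\Gal(\widetilde{L}/K(C_i))]$. Combined with the previous step this matches $\deg\phi_i$ with the index between successive groups in (3).

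The heart of the argument is (3)$\leftrightarrow$(4). Since $A$ is transitive on the coset space $G/H$ we have $G=AH$. I would define the two maps $M\mapsto M\cap A$ and $J\mapsto JH$. First, for any $M$ with $H\le M\le G$, transitivity of $A$ forces $M=(M\cap A)H=H(M\cap A)$: any $m\in M$ has $mH=aH$ for some $a\in A$, so $a\in M\cap A$ and $m\in (M\cap A)H$. Hence $M\cap A$ automatically satisfies the commuting condition of (4). Second, for $J$ with $H\cap A\le J\le A$ and $JH=HJ$, the product $JH$ is a subgroup (a product of subgroups is a subgroup exactly when they commute setwise), and $(JH)\cap A=J$, because $a=jh\in A$ with $j\in J$ forces $h\in H\cap A\subseteq J$. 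Together with $(M\cap A)H=M$, this shows the two maps are inclusion-preserving mutual inverses and so induce a bijection on decreasing chains. For index preservation I would use orbit--stabilizer: $M_{i-1}\cap A$ acts transitively on the block $M_{i-1}/H$ (again by transitivity of $A$) with point-stabilizer $(M_{i-1}\cap A)\cap H=H\cap A$, giving $[M_{i-1}:H]=[M_{i-1}\cap A:H\cap A]$, and likewise for $M_i$; dividing yields $[M_{i-1}:M_i]=[M_{i-1}\cap A:M_i\cap A]$.

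I expect the main obstacle to lie entirely in step (3)$\leftrightarrow$(4): identifying that the condition $JH=HJ$ is exactly what makes $JH$ a subgroup and what characterizes those subgroups between $A$ and $H\cap A$ corresponding to blocks of the $A$-action on $G/H$ that are simultaneously blocks of the full $G$-action, and then carrying out the orbit--stabilizer bookkeeping that preserves indices along a chain. By contrast, steps (1)$\leftrightarrow$(2) and (2)$\leftrightarrow$(3) are routine applications of the curve--function-field dictionary and of Galois theory, respectively.
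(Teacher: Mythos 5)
Your proposal is correct and follows essentially the same route as the paper: the curve--function-field dictionary for (1)$\leftrightarrow$(2), the Galois correspondence for (2)$\leftrightarrow$(3), and the mutually inverse maps $M\mapsto M\cap A$, $J\mapsto JH$ (the paper's Lemma~\ref{G1}) with the same index bookkeeping for (3)$\leftrightarrow$(4). The only detail you elide is that an arbitrary one-point stabilizer $H$ need only be \emph{conjugate} to $\Gal(\widetilde{L}/K(C))$, but conjugation carries chains to chains preserving indices, so this is immaterial.
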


\begin{proof}
Let $\phi=\phi_n\circ\phi_{n-1}\circ\dots\circ\phi_1$ be a decomposition of $\phi$, where $\phi_i\colon C_{i-1}\to C_i$
with $C_0=C$ and $C_n=D$.
Associate to this decomposition the chain of fields $K(C_n)\subset K(C_{n-1})\subset\dots\subset K(C_0)$,
where the inclusion $K(C_i)\hookrightarrow K(C_{i-1})$ is defined by $\psi\mapsto \psi\circ\phi_i$.
Then the usual equivalence of categories between curves over $K$ (and their covers) and
finitely generated field extensions of $K$ having transcendence degree $1$ (and their separable
extensions) \cite[Cor.~6.12]{Hartshorne} shows that this association yields a bijection between (1) and (2),
and also that $\deg\phi_i=[K(C_{i-1}):K(C_i)]$.  Next let $\Omega$ be the Galois closure of
$K(C)/K(D)$, so that $G=\Gal(\Omega/K(D))$.  Then the Galois correspondence \cite[Thm.~VI.1.1]{Lang}
 yields a bijection
between (2) and the set of decreasing chains of groups between $G$ and $\tilde H:=\Gal(\Omega/K(C))$,
where the degree of each successive extension in the chain of fields equals the index between the 
corresponding groups in the chain of groups.
Since $\tilde H$ and $H$ are conjugate subgroups of $G$, this yields a bijection between (2) and (3).
The following lemma yields a bijection between (3) and (4), and shows that the indices between
successive groups in a chain in (4) equal the analogous indices in the corresponding chain in (3).
\end{proof}

\begin{lemma} \label{G1}
Let $G$ be a permutation group, let $H$ be a one-point stabilizer, and let $A$ be a transitive subgroup
of $G$.  Then the map $\rho\colon U\mapsto U\cap A$ is a bijection from the set of groups between
$G$ and $H$  to the set of groups $J$ between $A$ and $H\cap A$ for which $JH=HJ$.
Moreover, $[G:U]=[A:U\cap A]$ and  $\rho(\langle U,V\rangle)=\langle\rho(U),\rho(V)\rangle$
and $\rho(U\cap V)=\rho(U)\cap\rho(V)$ for any groups $U,V$ between $H$ and $G$.
\end{lemma}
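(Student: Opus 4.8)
The plan is to realize $\rho$ as one half of a pair of mutually inverse, inclusion-preserving maps between the two posets of subgroups, and then to read off the index and lattice statements from the explicit descriptions of these maps. Writing $\Omega$ for the set permuted by $G$ and $\omega$ for the point with $H=G_\omega$, I would first record the product decomposition coming from transitivity of $A$. For $g\in G$ there is $a\in A$ with $a\omega=g\omega$, whence $a^{-1}g\in H$ and $g\in AH$; thus $G=AH=HA$. Applied to $u\in U$ for any group $U$ with $H\le U\le G$, the same argument together with $a^{-1}u\in H\le U$ gives $a\in U\cap A$, so that $U=(U\cap A)H=H(U\cap A)$. Two consequences are immediate: $\rho(U)=U\cap A$ contains $H\cap A$ and satisfies $(U\cap A)H=U=H(U\cap A)$, so $\rho$ does land in the stated target set; and $A$ acts transitively on $G/U$ (because $G=AU$) with the stabilizer of the trivial coset equal to $U\cap A$, which yields $[G:U]=[A:U\cap A]$.

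Next I would exhibit the inverse. For $J$ in the target set set $\sigma(J)=JH$; since $JH=HJ$ this is a subgroup of $G$ lying between $H$ and $G$. The identity $\sigma(\rho(U))=(U\cap A)H=U$ is exactly the decomposition above, and $\rho(\sigma(J))=JH\cap A=J$ follows from a short calculation: if $jh\in A$ with $j\in J\le A$, then $h=j^{-1}(jh)\in H\cap A\le J$, so $jh\in J$. Hence $\rho$ and $\sigma$ are mutually inverse bijections, both inclusion-preserving.

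Then comes the lattice statement. The meet identity $\rho(U\cap V)=\rho(U)\cap\rho(V)$ is the purely set-theoretic equality $(U\cap V)\cap A=(U\cap A)\cap(V\cap A)$. The join identity is the real content, and it is where I expect the main obstacle: a priori $\langle\rho(U),\rho(V)\rangle$ is only the join of $\rho(U),\rho(V)$ inside the subgroup lattice of $A$, and one must show it coincides with $\rho(\langle U,V\rangle)$, i.e.\ that the subgroup $W:=\langle U\cap A,\,V\cap A\rangle$ already permutes with $H$. I would prove $WH=HW$ by a ``moving $H$ past the generators'' argument: writing an element $w\in W$ as a product of factors from $U\cap A$ and $V\cap A$, and using $(U\cap A)H=H(U\cap A)$ and $(V\cap A)H=H(V\cap A)$ to push a given $h\in H$ successively to the left past each factor, one rewrites $wh$ in the form $h_0w'$ with $h_0\in H$ and $w'\in W$; the symmetric manoeuvre gives the reverse inclusion.

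Once $WH=HW$ is known, $WH$ is a subgroup lying in the source range and containing both $U=(U\cap A)H$ and $V=(V\cap A)H$, while $W,H\subseteq\langle U,V\rangle$ forces $WH\subseteq\langle U,V\rangle$; hence $WH=\langle U,V\rangle$. Intersecting with $A$ and using $\rho\circ\sigma=\mathrm{id}$ on $W$ (so that $WH\cap A=W$) then yields $\rho(\langle U,V\rangle)=\langle U,V\rangle\cap A=W=\langle\rho(U),\rho(V)\rangle$, completing the proof. I do not expect either the bijection or the index equality to present difficulty; the entire weight of the lemma rests on the closure-under-join step of the previous paragraph.
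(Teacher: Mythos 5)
Your proof is correct and takes essentially the same route as the paper: transitivity gives $G=AH$, every intermediate group decomposes as $U=(U\cap A)H$, the inverse map is $J\mapsto JH$, and the join identity reduces to showing that $\langle U\cap A,\,V\cap A\rangle$ permutes with $H$. The only differences are in presentation --- the paper merely asserts the key permuting facts ($\langle I,J\rangle H=H\langle I,J\rangle$ and $IH\cap JH=(I\cap J)H$) where you supply the generator-pushing argument, and you rightly shortcut the meet identity to pure set theory.
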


\begin{proof}
Transitivity of $A$ means that $G=AH$.  Every group $U$ between $H$ and $G$ is a union of cosets $gH$
with $g\in G$, and since $G=AH$ we know that every such coset equals $aH$ for some $a\in A$, whence
$U=(U\cap A)H$.
  Conversely, if $J$ is a group between
$A$ and $H\cap A$ then $JH$ is a group if and only if $JH=HJ$ (in which case $[G:JH]=[A:J]$).
This proves that $\rho$ is a bijection.  The final assertion follows from bijectivity of $\rho$
and the fact that if $I,J$ are groups between $A$ and $H\cap A$  which satisfy $IH=HI$ and $JH=HJ$ 
then also $\langle I,J\rangle H=H \langle I,J\rangle$ and 
$IH\cap JH=(I\cap J)H$, whence $(I\cap J)H=H(I\cap J)$.
\end{proof}

The utility of Lemma~\ref{Decr.Incr} stems from the fact that, for any fixed positive integer $n$,
questions about an infinite collection of objects (namely, all degree-$n$ covers of curves over
an arbitrary field $K$) have been translated into questions about a finite collection of objects
(namely, certain types of subgroups of $S_n$).  One immediate consequence of this translation
is as follows.

\begin{corollary} \label{corfin}
Any cover of curves over any field $K$ has only finitely many equivalence classes of 
decompositions.  Moreover, the number of such equivalence classes of decompositions is bounded
above by a constant which depends only on the degree of the cover.
\end{corollary}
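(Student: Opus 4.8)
The plan is to read the result directly off of Lemma~\ref{Decr.Incr}. Fix a cover $\phi\colon C\to D$ of degree $d$, let $G=\Mon(\phi)$ be its monodromy group, and let $H$ be a one-point stabilizer in $G$. The bijection between sets (1) and (3) of that lemma identifies the equivalence classes of decompositions of $\phi$ with the decreasing chains of groups between $G$ and $H$; moreover, under this identification the degrees of the indecomposable factors become the indices of successive groups in the chain, each of which is at least $2$. Thus it suffices to bound the number of such chains of subgroups.

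The first, qualitative, assertion is then immediate: $G$ is a transitive subgroup of $S_d$ and hence is a finite group, so it has only finitely many subgroups, and a fortiori only finitely many chains of subgroups lying between $H$ and $G$. Therefore $\phi$ has only finitely many equivalence classes of decompositions.

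For the quantitative assertion I would observe that the only feature of $G$ used above is its order, and that $G$ embeds in $S_d$, so $\lvert G\rvert$ divides $d!$. The number of subgroups of a finite group, and hence the number of chains of subgroups, is bounded in terms of its order alone; applied to a group whose order divides $d!$ this produces a bound $N(d)$ depending only on $d=\deg\phi$. One can sharpen this using that the chains in (3) are strictly decreasing with all successive indices at least $2$, which forces every chain to have length at most $\log_2 d$; but any crude bound in terms of $d!$ already suffices. Since $N(d)$ does not depend on $K$ or on $\phi$ beyond its degree, this is exactly the claimed bound.

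There is essentially no hard step here: the content is entirely in the translation provided by Lemma~\ref{Decr.Incr}, after which finiteness of $S_d$ does the rest. The one point that requires a word of care is that the finite group $G$ itself varies with $\phi$, so in order to obtain a bound depending only on $\deg\phi$ one must phrase the count in terms of the fixed ambient group $S_d$ rather than in terms of $G$; this is handled by the observation that $\lvert G\rvert$ divides $d!$.
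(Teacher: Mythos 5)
Your proposal is correct and follows essentially the same route as the paper: both reduce the statement to Lemma~\ref{Decr.Incr} and then bound the number of decreasing chains of subgroups, the paper by counting chains of subgroups of $S_d$ directly and you by the equivalent observation that $\lvert G\rvert$ divides $d!$. Your extra remark about $G$ varying with $\phi$ is a fair point of care, but it is handled the same way in both arguments.
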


\begin{proof}
By Lemma~\ref{Decr.Incr}, the number of equivalence classes of decompositions of a degree-$n$
cover is at most the number of decreasing chains of subgroups of $S_n$.
\end{proof}

\begin{remark} \label{remprim}
In the case of covers $\Line\to\Line$, Corollary~\ref{corfin} asserts that a rational function
in $K(X)$ with nonzero derivative has only finitely many equivalence classes of decompositions.
In fact the proof of Lemma~\ref{Decr.Incr} implies the same conclusion for rational functions with
zero derivative, since the number of equivalence classes of decompositions equals the number of
decreasing chains of fields between $K(x)$ and $K(f(x))$ (where $x$ is transcendental over $K$),
and there are only finitely many fields between $K(x)$ and $K(f(x))$ since $K(x)/K(f(x))$ is a simple
extension \cite[Thm.~V.4.6]{Lang}.  However, there exist inseparable finite morphisms between curves
which admit infinitely many equivalence classes of decompositions \cite[Exerc.~V.24]{Lang}.
This behavior is typical for questions about decompositions of inseparable morphisms:
inseparable morphisms between curves can have completely different properties than do separable
morphisms, but inseparable rational functions behave in exactly the same way
as do separable rational functions (and likewise for polynomials).
\end{remark}

\begin{remark}
In the case of covers $\Line\to\Line$, Lemma~\ref{Decr.Incr} may be compared with the assertion
in \cite[p.~666]{GS2} that in this setting ``there is no bijection between groups and intermediate fields".
\end{remark}


\section{Functional decomposition of polynomials} \label{secpoly}

Functional decomposition is often studied for polynomials $f(X)\in K[X]$, where one is interested in
the expressions of $f(X)$ as the composition of polynomials in $K[X]$ of degree at least $2$.  Here we
say that two decompositions 
$f=f_n\circ f_{n-1}\circ\dots\circ f_1$ and $f=g_m\circ g_{m-1}\circ\dots\circ g_1$ are equivalent if $m=n$
and there are degree-one $\mu_i\in K[X]$ (for $0\le i\le n$) such that $\mu_0=\mu_n=X$ and
$g_i\circ\mu_{i-1}=\mu_i\circ f_i$ for $1\le i\le n$.  Note that we have already defined a different
notion of decompositions of a polynomial, since a polynomial may be viewed as a rational function.
In this section we show that these two notions are compatible, and we also show that if
$\charp(K)\nmid\deg(f)$
then the monodromy group of $f(X)$ has a transitive cyclic subgroup.

\begin{lemma} \label{ratpol}
Let $K$ be a field and pick any $f(X)\in K[X]$.  Then 
every equivalence class of decompositions of $f(X)$ in the sense of rational functions contains 
exactly one equivalence class of decompositions of $f(X)$ in the sense of polynomials.
\end{lemma}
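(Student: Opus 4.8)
The plan is to establish two complementary facts: (\textbf{existence}) every equivalence class of rational-function decompositions of $f$ contains at least one decomposition into polynomials, and (\textbf{uniqueness}) any two polynomial decompositions that happen to be equivalent as rational functions are already equivalent as polynomials. Together these yield that the rational-function class contains exactly one polynomial class. The fact I would use throughout is the characterization that a rational function $h\in K(X)$ lies in $K[X]$ exactly when its only pole is $\infty$; equivalently, viewing $h$ as a cover $\Line\to\Line$, exactly when $\infty$ is the unique point over $h(\infty)=\infty$. Since $f$ is a polynomial, this says the infinite place of $K(C)=K(X)$ is the unique place lying over the infinite place of $K(D)=K(f)$.

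For existence I would start from an arbitrary decomposition $f=f_n\circ\dots\circ f_1$ of rational functions, with $f_i\colon C_{i-1}\to C_i$ and $C_0=C_n=\Line$. By Lüroth's theorem each intermediate field $K(C_i)$ (lying between $K(f)$ and $K(X)$) is rational, so every $C_i\cong\Line$ and every $f_i$ is a rational function. Let $P_i$ be the restriction to $K(C_i)$ of the infinite place of $K(X)$; then $P_0=P_n=\infty$, each $P_i$ has residue field $K$ and so is a $K$-rational point, and $P_{i-1}$ lies over $P_i$ under $f_i$. I would then verify that $P_{i-1}$ is the \emph{only} place of $K(C_{i-1})$ over $P_i$: any such place has a place of $K(X)$ lying over it, which must then lie over the infinite place of $K(f)$ and hence be $\infty$, forcing the place below it to be $P_{i-1}$. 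Choosing degree-one $\mu_i\in K(X)$ that carry $P_i$ to $\infty$ (with $\mu_0=\mu_n=X$, legitimate as $P_0=P_n=\infty$) and setting $g_i:=\mu_i\circ f_i\circ\mu_{i-1}^{-1}$, the identity $f_i^{-1}(P_i)=\{P_{i-1}\}$ gives $g_i^{-1}(\infty)=\{\infty\}$, so each $g_i$ is a polynomial of degree $\deg f_i\ge 2$. The relations $g_i\circ\mu_{i-1}=\mu_i\circ f_i$ then exhibit $f=g_n\circ\dots\circ g_1$ as a polynomial decomposition in the same rational-function class.

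For uniqueness I would take two polynomial decompositions $f=f_n\circ\dots\circ f_1=g_n\circ\dots\circ g_1$ that are equivalent as rational functions, via degree-one $\mu_i\in K(X)$ with $\mu_0=\mu_n=X$ and $g_i\circ\mu_{i-1}=\mu_i\circ f_i$, and show that each $\mu_i$ fixes $\infty$ and is therefore of the form $a_iX+b_i$, exactly what equivalence in the sense of polynomials requires. This I would do by induction: $\mu_0=X$ fixes $\infty$, and if $\mu_{i-1}(\infty)=\infty$ then evaluating $g_i\circ\mu_{i-1}=\mu_i\circ f_i$ at $\infty$, together with the fact that the polynomials $f_i,g_i$ fix $\infty$, gives $\mu_i(\infty)=g_i(\infty)=\infty$.

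The step I expect to be the main obstacle is the one global input in the existence half, namely confirming that $P_{i-1}$ is the unique place over $P_i$, so that each straightened $g_i$ is genuinely a polynomial and not merely a rational function totally ramified over $\infty$; the rest is bookkeeping with Möbius coordinates and the observation that polynomials fix $\infty$. Since the whole argument is phrased in terms of places and poles, it uses no separability hypothesis and so applies equally to inseparable $f$, where decompositions are read off from the corresponding chains of intermediate fields (cf.\ Remark~\ref{remprim}).
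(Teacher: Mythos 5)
Your proposal is correct and takes essentially the same approach as the paper: your place $P_i$ is precisely the paper's unique preimage of $\infty$ under the tail composition $f_n\circ\dots\circ f_{i+1}$, and both arguments straighten the decomposition by degree-one maps carrying these distinguished points to $\infty$ so that each factor becomes a polynomial. Your uniqueness half, the induction showing that each connecting $\mu_i$ fixes $\infty$ and is therefore a degree-one polynomial, is simply an explicit rendering of the paper's closing assertion that any two choices in this procedure yield decompositions equivalent in the sense of polynomials.
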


\begin{proof}
Rational functions (or polynomials) of degree less than $2$ have no decompositions according to our
definitions, so we may assume that $\deg(f)\ge 2$.  Write $f=f_n\circ f_{n-1}\circ\dots\circ f_1$
where $f_i\in K(X)$.
Since $f$ is a polynomial, we have $f^{-1}(\infty)=\{\infty\}$, so that $\infty$ has a unique preimage
under $f_n\circ\dots\circ f_i$ whenever $1\le i\le n$.  Define $\mu_n=\mu_0=X$ and,
for each $i=n-1,n-2,\dots,1$ in succession, let $\mu_i\in K(X)$ be a degree-one rational function
for which $\hat f_{i+1}:=\mu_{i+1}^{-1}\circ f_{i+1}\circ\mu_i$ fixes $\infty$, and hence is a polynomial.
  Then also $\hat f_1:=\mu_1^{-1}\circ f_1\circ\mu_0$ fixes $\infty$, and
$f=\hat f_n\circ\hat f_{n-1}\circ\dots\circ\hat f_1$ is a decomposition of $f$ which is equivalent
to our original decomposition.  Finally, our procedure for defining the $\mu_i$'s shows that any
two choices yield decompositions which are equivalent in the sense of polynomials.
\end{proof}

In light of this result, Remark~\ref{remprim} implies the following:

\begin{corollary} \label{silly}
Any $f(X)\in K[X]$ has only finitely many equivalence classes of decompositions
in the sense of polynomials.
\end{corollary}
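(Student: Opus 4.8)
The plan is to deduce this finiteness from the corresponding finiteness for decompositions in the sense of rational functions, which is already recorded in Remark~\ref{remprim}. First I would observe that a polynomial decomposition $f = f_n \circ \dots \circ f_1$ with each $f_i \in K[X]$ is in particular a decomposition of $f$ in the sense of rational functions, and moreover that polynomial equivalence implies rational equivalence, since the degree-one polynomials $\mu_i$ appearing in the polynomial notion of equivalence are in particular degree-one rational functions. Hence there is a well-defined ``forgetful'' map $\Phi$ sending each polynomial equivalence class of decompositions of $f$ to the rational equivalence class that contains it.

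Next I would invoke Lemma~\ref{ratpol}, which asserts that every equivalence class of decompositions of $f$ in the sense of rational functions contains \emph{exactly one} equivalence class of decompositions in the sense of polynomials. The ``at most one'' half of this statement is precisely the injectivity of $\Phi$: if two distinct polynomial classes had the same image, they would be two polynomial classes sitting inside a single rational class, contradicting uniqueness. Thus $\Phi$ is injective.

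Finally, Remark~\ref{remprim} shows that $f$, viewed as a rational function, has only finitely many equivalence classes of decompositions; indeed these are in bijection with the decreasing chains of fields between $K(x)$ and $K(f(x))$, of which there are finitely many because $K(x)/K(f(x))$ is a simple extension. Since $\Phi$ injects the set of polynomial equivalence classes into this finite set, there can be only finitely many polynomial equivalence classes, which is the desired conclusion.

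There is no substantive obstacle here: all the mathematical content is carried by the two cited results, and what remains is purely a bookkeeping argument about equivalence classes. The one point that requires care is to apply the uniqueness clause of Lemma~\ref{ratpol} in the correct direction, so that it yields injectivity of the forgetful map from polynomial classes to rational classes (rather than some statement in the reverse direction), allowing the finiteness of the rational classes to transfer to the polynomial classes.
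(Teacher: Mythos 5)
Your proof is correct and is essentially the paper's own argument: the paper deduces Corollary~\ref{silly} directly from Lemma~\ref{ratpol} together with Remark~\ref{remprim}, exactly as you do. You have simply made explicit the injectivity of the forgetful map from polynomial classes to rational classes, which the paper leaves implicit in the phrase ``in light of this result.''
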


\begin{remark} \label{sillyrem}
 The special case of this result for $K=\C$ was first proved in 1922
\cite[\S 2]{R22} via essentially the same method as above.
Corollary~\ref{silly} also follows at once from \cite[\S 2]{Levi} and \cite[Cor.~2.3]{vzGtame},
where a method is used which
only applies to polynomials.  After appearing in dozens of papers and books  over
the next several decades, the case $K=\C$ of Corollary~\ref{silly} arose again in 2000 as
one of the main ``new" results of \cite{BNg}, where it was proved by a more complicated version
of the proof in \cite[\S 2]{Levi}.  The authors of \cite{BNg} motivated the $K=\C$ case of
Corollary~\ref{silly} by making the curious assertion that no previous authors had
noticed the special role of degree-one polynomials in the theory of functional decomposition;
however, this special role is addressed in nearly every treatment of this topic,
for instance \cite{Binder,DW74,FM69,vzGtame,LauschN,Levi,R22,Schinzelold}.  Indeed, this is an
instance of the special role units play in the theory of factorization in any monoid.
\end{remark}

Next we show that, if $f(X)\in K[X]$ has degree not divisible by $\charp(K)$,
then the monodromy group of $f(X)$ contains
a transitive cyclic subgroup.  Here the monodromy group is just the Galois group of $f(X)-t$
over $K(t)$, where $t$ is transcendental over $K$.  One such transitive cyclic subgroup is the
inertia group at any place of the splitting field of $f(X)-t$ which lies over the infinite place of $K(t)$,
as has been well-known for over a hundred years.  For the benefit of authors unfamiliar
with inertia groups, we include here a self-contained proof of the existence of a transitive
cyclic subgroup (based on Newton's ideas as arranged in \cite[Lemma~3.3]{Turnwald}).

\begin{lemma} \label{noinertia}
If $f(X)\in K[X]$ is a degree-$n$ polynomial over a field $K$ for which $\charp(K)\nmid n$,
then the monodromy group of $f(X)$ contains a transitive cyclic subgroup.
\end{lemma}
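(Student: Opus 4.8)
The plan is to show that $f(X)-t$, viewed as a polynomial over the field $K(t)$, acquires a totally ramified place over the infinite place of $K(t)$, and that the tameness hypothesis $\charp(K)\nmid n$ forces the corresponding inertia group in the splitting field to be cyclic and transitive. Concretely, I would first pass to the ``place at infinity'': substitute $t=1/s$ and study the behavior of the roots of $f(X)-t$ as $s\to 0$. Since $f$ has degree $n$, for large $t$ the $n$ roots $x_1(t),\dots,x_n(t)$ all tend to $\infty$, and the natural move is to make this quantitative by writing each root as a Puiseux-type series in a fractional power of $s$. Following the Newton-polygon computation arranged in \cite[Lemma~3.3]{Turnwald}, I would show that the roots admit expansions in powers of $s^{1/n}$ (i.e.\ in powers of $t^{-1/n}$), and that $K(t)\bigl(x_1(t)\bigr)$ already contains all $n$ roots once one adjoins an $n$th root of $s$; the key input here is that because $\charp(K)\nmid n$, the $n$th roots of unity are separable and the extension generated by $s^{1/n}$ is cyclic of degree $n$ with Galois group generated by $s^{1/n}\mapsto\zeta\,s^{1/n}$.

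The central mechanism is then the following. If $L$ denotes the splitting field of $f(X)-t$ over $K(t)$ and $w$ is a place of $L$ lying over the infinite place $v_\infty$ of $K(t)$, then the decomposition/inertia group at $w$ is a subgroup of $\Mon(f)=\Gal(L/K(t))$. I would argue that the completion of $K(t)$ at $v_\infty$ is the Laurent series field $\bar K((s))$ (or rather its relevant subfield), and that adjoining the roots of $f(X)-t$ amounts, up to a degree-one change of variable, to adjoining $s^{1/n}$. The explicit action $s^{1/n}\mapsto\zeta_n\,s^{1/n}$ generates a cyclic group of order $n$ which permutes the $n$ leading terms $\zeta_n^j s^{-1/n}$ of the roots transitively; hence this cyclic group acts transitively on $\{x_1,\dots,x_n\}$ and sits inside $\Mon(f)$ as a transitive cyclic subgroup. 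The tameness hypothesis is exactly what guarantees this ramification is tame, so the inertia group is cyclic rather than having a nontrivial wild ($p$-)part.

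The main obstacle, and the part requiring care, is making the Newton-polygon/Puiseux analysis rigorous over an arbitrary base field $K$ of characteristic possibly dividing some lower-order data but \emph{not} dividing $n$. One must verify that the leading exponent of each root is $-1/n$ (so the Newton polygon has a single segment of the correct slope) and that the resulting field extension is genuinely cyclic of degree $n$ and totally (tamely) ramified; in particular one needs $\charp(K)\nmid n$ to ensure the roots of unity $\zeta_n$ are present and distinct, so that the generator $s^{1/n}\mapsto\zeta_n s^{1/n}$ has order exactly $n$ and acts transitively. A clean way to finish, avoiding the full theory of inertia groups as the lemma statement promises, is to work directly with the concrete substitution and the explicit Galois automorphism of $\bar K(t)(s^{1/n})/\bar K(t)$: I would exhibit the transitive cyclic subgroup as the image of $\langle\sigma\rangle$ where $\sigma$ is this explicit automorphism, and check transitivity by tracking its action on the leading coefficients of the $x_i$. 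This reduces the entire proof to an elementary, self-contained Puiseux-series calculation, at the cost of verifying carefully that the series expansions converge $s$-adically and that no two of the $n$ roots coincide.
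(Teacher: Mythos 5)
Your proposal is correct and takes essentially the same route as the paper's own proof: the paper likewise works over the Laurent series field $\bar{K}((1/t))$, expands the $n$ roots as explicit series $x_c = cs+\sum_{i\le 0}a_i s^i$ with $s^n=t$ (the coefficient-by-coefficient recursion, which needs $\charp(K)\nmid n$, is exactly the ``careful verification'' your plan defers), and produces the transitive cyclic subgroup as the restriction to the splitting field of the automorphism $s\mapsto\theta s$ for $\theta$ a primitive $n$-th root of unity, checking transitivity on leading coefficients just as you describe. Your initial framing via inertia groups, followed by the decision to avoid that theory in favor of the self-contained series computation, mirrors the paper's own remarks verbatim in spirit, down to the citation of Turnwald's Lemma~3.3.
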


\begin{proof}
Let $t$ be transcendental over $K$, and let $\bar{K}$ be an algebraic closure of $K$.
Let $L:=\bar{K}((1/t))$ be the field of formal Laurent series over $\bar{K}$,
namely the set of expressions $\sum_{i=-\infty}^{\infty} a_i t^i$ where $a_i\in \bar{K}$ and where 
in addition there exists an integer $N$ for which $a_i=0$ whenever $i>N$.
We first show that the Galois group of $f(X)-t$ over $L$ has a transitive cyclic subgroup.
Let $s$ be any root of $X^n-t$ in an extension of $L$, and note that $L(s)=\bar{K}((1/s))$.
Let $b$ be the leading coefficient of $f(X)$.
For any $c\in\bar{K}$ such that $c^n=b$, if we write
$x_c:=c s+\sum_{i=-\infty}^{0} a_i s^i$
then there is a unique choice of elements $a_i\in\bar{K}$ for which $f(x_c)=s^n$,
since for each $i=0,1,2,3,\dots$ in succession we can uniquely determine $a_i$ from
the condition that the coefficient of $s^{n-1-i}$ in $f(x_c)-s^n$ equals zero.
Now let $\theta\in\bar{K}$ be a primitive $n$-th root of unity, and let
$\sigma$ be the automorphism of $L(s)$ which maps $\sum_i a_i s^i$ to $\sum_i \sigma(a_i) s^i$.
Since $\sigma$ fixes every element of $L$, it must permute the roots of $f(X)-t$, namely the $n$
elements $x_c$ with $c^n=b$.  By considering the action of $\sigma$
on the coefficient of $s$ in the various elements $x_c$, we see that $\sigma$ induces a transitive 
permutation on the $x_c$'s.  Therefore $\langle\sigma\rangle$ is a transitive cyclic subgroup
of the Galois group of $f(X)-t$ over $L$, so the restriction of $\langle\sigma\rangle$ to the splitting
field of $f(X)-t$ over $K(t)$ is a transitive cyclic subgroup of the monodromy group of $f(X)$.
\end{proof}


\section{Ritt's first theorem}\label{Sec:Quasi}

In this section we show that if $\phi\colon C\to D$ is a cover of curves whose monodromy group
has a transitive quasi-Hamiltonian subgroup, then any two complete decompositions of $\phi$ have
the same length and the same multiset of degrees of the involved indecomposable subcovers.
Moreover, we prove that we can pass from any complete decomposition of $\phi$ to any other via
finitely many steps of a special form; this will play a crucial role in subsequent sections.

\begin{theorem}\label{RittFirst}
Let $\phi\colon C\to D$ be a cover of curves over a field $K$, and suppose that the
monodromy group of $\phi$ has a transitive quasi-Hamiltonian subgroup.
Then any complete decomposition $\phi=\phi_n\circ\phi_{n-1}\circ\dots\circ\phi_1$ can be
obtained from any other complete decomposition
$\phi=\psi_m\circ\psi_{m-1}\circ\dots\circ\psi_1$ through finitely many steps, where in each
step we replace two adjacent indecomposable covers $\theta_2,\theta_1$ in a complete
decomposition by two
other indecomposable covers $\hat \theta_2,\hat\theta_1$ 
such that $\theta_2\circ\theta_1=\hat\theta_2\circ\hat\theta_1$ and
$\{\deg\theta_1,\deg\theta_2\}=\{\deg\hat\theta_1,\deg\hat\theta_2\}$.
In particular, $m=n$ and the sequence $(\deg\phi_i)_{1\le i\le n}$ is a permutation of the
sequence $(\deg\psi_i)_{1\le i\le m}$.
\end{theorem}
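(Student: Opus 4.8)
```latex
The plan is to transport the entire problem to the group-theoretic side via Lemma~\ref{Decr.Incr} and work inside the transitive quasi-Hamiltonian subgroup $A$ of $G=\Mon(\phi)$. By that lemma, a complete decomposition of $\phi$ corresponds to a \emph{maximal} decreasing chain of groups between $A$ and $H\cap A$ (maximal because each $\phi_i$ is indecomposable, so no group can be inserted strictly between consecutive terms), where the indices between successive groups equal the degrees $\deg\phi_i$. So the two complete decompositions $\phi_n\circ\dots\circ\phi_1$ and $\psi_m\circ\dots\circ\psi_1$ become two maximal chains
\[
H\cap A=U_0\subsetneq U_1\subsetneq\dots\subsetneq U_n=A,\qquad
H\cap A=V_0\subsetneq V_1\subsetneq\dots\subsetneq V_m=A
\]
of subgroups of $A$ in which every term $J$ satisfies $JH=HJ$ (equivalently, by Lemma~\ref{G1}, every $U_i$ is of the form $\rho(\text{group between }H\text{ and }G)$). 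The ``two-step swap'' in the theorem corresponds on the group side to modifying a chain only at a single intermediate group, i.e.\ replacing one $U_i$ (with $1\le i\le n-1$) by a different group $U_i'$ lying strictly between $U_{i-1}$ and $U_{i+1}$; the degree condition $\{\deg\theta_1,\deg\theta_2\}=\{\deg\hat\theta_1,\deg\hat\theta_2\}$ becomes the assertion that the multiset of indices $\{[U_{i+1}:U_i],[U_i:U_{i-1}]\}$ is preserved under this local move.

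First I would reduce to a purely lattice-theoretic statement about the collection $\mathcal{L}$ of subgroups of $A$ that arise as the $J$'s in Lemma~\ref{Decr.Incr}(4). The key structural input is that $A$ is quasi-Hamiltonian, so $IJ=JI$ for all subgroups $I,J\le A$; this forces $IJ=\langle I,J\rangle$ to be a subgroup, and it makes the subgroup lattice of $A$ \emph{modular}. Modularity is exactly the property that should drive the whole argument, because it is the lattice-theoretic form of the Jordan--H\"older / Zassenhaus setup. I would verify that $\mathcal{L}$ is a sublattice of the subgroup lattice of $A$ containing $H\cap A$ and $A$: closure under joins and meets is precisely the content of the displayed identities $\rho(\langle U,V\rangle)=\langle\rho(U),\rho(V)\rangle$ and $\rho(U\cap V)=\rho(U)\cap\rho(V)$ from Lemma~\ref{G1}, so $\mathcal{L}$ inherits modularity from $A$. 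Thus complete decompositions of $\phi$ are in bijection with maximal chains from $H\cap A$ to $A$ in the modular lattice $\mathcal{L}$, with chain edges labelled by subgroup indices.

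The heart of the argument is then a local exchange property inside a modular lattice: given two maximal chains from a common bottom to a common top, I would show they can be connected by a sequence of single-term swaps, each preserving the multiset of edge-indices. The basic local move comes from the diamond isomorphism theorem for modular groups: if $U_{i-1}<U_i<U_{i+1}$ and $W$ is any other group with $U_{i-1}<W<U_{i+1}$, then in the modular lattice $U_i\wedge W$ and $U_i\vee W$ relate the two short chains, and the standard index relations $[U_i:U_i\wedge W]=[U_i\vee W:W]$ give the required equality of index multisets; indecomposability of the covers guarantees these intermediate groups collapse correctly so that the swap stays within $\mathcal{L}$ and yields a valid complete decomposition. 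To globalize from the local move to arbitrary pairs of maximal chains, I would induct on the length (or on a suitable distance between the two chains), peeling off a common top or bottom edge when the chains already agree there, and otherwise using the diamond move at the highest point of disagreement to bring the chains into agreement one edge further down. The final sentence of the theorem, that $m=n$ and the degree sequences are permutations of each other, then follows immediately because every swap preserves both the length and the index multiset, and by the Jordan--H\"older theorem for modular lattices all maximal chains between the same endpoints have equal length.

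I expect the main obstacle to be the globalization step rather than the single diamond move: ensuring that the inductive procedure to align two arbitrary maximal chains terminates, and that at each stage the intermediate chains remain maximal chains consisting of groups in $\mathcal{L}$ (not merely in the full subgroup lattice of $A$). The delicate point is that quasi-Hamiltonicity guarantees $\mathcal{L}$ is closed under join and meet, but one must confirm that the groups produced by the diamond construction $U_i\mapsto (U_i\vee W)\wedge(\dots)$ used in the induction genuinely lie in $\mathcal{L}$ and cover their neighbors with the right indices; I would handle this by phrasing the induction entirely in terms of the lattice operations $\vee,\wedge$ on $\mathcal{L}$, so that closure is automatic, and by invoking the modular law to control the indices at each step.
```
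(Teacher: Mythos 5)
Your proposal is correct and follows essentially the same route as the paper: its reduction via Lemma~\ref{Decr.Incr} and Lemma~\ref{G1} to maximal chains of subgroups $J$ of $A$ with $JH=HJ$, followed by Lemma~\ref{chains} (an induction on $\lvert A\rvert$ exploiting closure of this family under products and intersections), is precisely the Jordan--H\"older-style exchange argument you describe, with your appeal to modularity of the subgroup lattice replaced in the paper by the product formula $[IJ:J]=[I:I\cap J]$ for permutable subgroups. The only difference is packaging: the paper never invokes modular-lattice theory, instead proving from scratch the globalization/exchange step that you correctly identify as the crux.
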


By Lemma~\ref{Decr.Incr}, Theorem~\ref{RittFirst} is a consequence of the following
group-theoretic assertion.

\begin{lemma}\label{chains}
Let $A$ and $B$ be finite groups with $B\le A$.  Let $S$ be a
set of groups between $A$ and $B$ such that $A,B\in S$ and $S$ contains both
 $IJ$ and $I\cap J$ whenever it contains groups $I$ and $J$.
Let $A=V_n>V_{n-1}> \dots > V_{0}=B$ and
 $A=W_m>W_{m-1}> \dots > W_{0}=B$ be two maximal decreasing chains of groups in $S$
 which lie between $A$ and $B$.
 Then one can pass from the first chain to the second chain through finitely many steps, where
  in each step a chain $A=C_k>C_{k-1}>\cdots >C_{0}=B$ is replaced by a chain
   $A=D_k>D_{k-1}>\cdots >D_0=B$ of groups
 in $S$ such that $D_i=C_i$ for all but one $i$ with $0<i<k$.  Moreover, if $C_i\ne D_i$ then
  $[C_i:C_{i-1}]=[C_{i+1}:D_i]$.
\end{lemma}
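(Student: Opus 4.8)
The plan is to recognize $(S,\cap,\cdot)$ as a modular lattice and then run a Jordan--H\"older--Dedekind style induction. First I would observe that the hypothesis $IJ\in S$ whenever $I,J\in S$ forces each such product $IJ$ to be a subgroup, so that $IJ=\langle I,J\rangle$ is the join and $I\cap J$ the meet of $I,J$ in $S$. I would then verify that $S$ is \emph{modular}: for $X,Y,Z\in S$ with $X\le Z$, Dedekind's identity $(XY)\cap Z=X(Y\cap Z)$ holds as a set identity in any group, and since all the products involved lie in $S$ and hence are groups, this is exactly the modular law $\langle X,Y\rangle\cap Z=\langle X,Y\cap Z\rangle$. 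The other ingredient I would record at the outset is the product formula $\lvert IJ\rvert\cdot\lvert I\cap J\rvert=\lvert I\rvert\cdot\lvert J\rvert$, which gives $[IJ:J]=[I:I\cap J]$; this is what will produce the required index equality $[C_i:C_{i-1}]=[C_{i+1}:D_i]$.

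The core is an induction on $\lvert A\rvert/\lvert B\rvert$. Given the two maximal chains, I compare their second-from-top groups $V_{n-1}$ and $W_{m-1}$; maximality means nothing in $S$ lies strictly between $A$ and either of them. If $V_{n-1}=W_{m-1}$ I apply the inductive hypothesis to the two tails, which are maximal chains between $V_{n-1}$ and $B$. If $V_{n-1}\ne W_{m-1}$, set $M:=V_{n-1}\cap W_{m-1}$ and $P:=V_{n-1}W_{m-1}$, both in $S$. Since $A$ covers $V_{n-1}$ and $P\supseteq V_{n-1}$, either $P=V_{n-1}$ (forcing $W_{m-1}\le V_{n-1}$, impossible as $A$ also covers $W_{m-1}$) or $P=A$; so $V_{n-1}W_{m-1}=A$. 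Using modularity I then show that $V_{n-1}$, and symmetrically $W_{m-1}$, covers $M$ in $S$: any $T\in S$ with $M<T<V_{n-1}$ would give $TW_{m-1}\in S$ with $W_{m-1}\le TW_{m-1}\le A$, whence $TW_{m-1}\in\{W_{m-1},A\}$ by covering, while modularity yields $(TW_{m-1})\cap V_{n-1}=T(W_{m-1}\cap V_{n-1})=T$ and so forces $T\in\{M,V_{n-1}\}$, a contradiction.

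Now I choose any maximal chain $M=U_l>\dots>U_0=B$ in $S$ and form the chains $\mathcal V^{*}\colon A>V_{n-1}>M=U_l>\dots>U_0$ and $\mathcal W^{*}\colon A>W_{m-1}>M=U_l>\dots>U_0$, both maximal by the covering facts just proved. These differ in exactly one interior group ($V_{n-1}$ versus $W_{m-1}$), so passing from $\mathcal V^{*}$ to $\mathcal W^{*}$ is a single step of the required form; the product formula gives $[V_{n-1}:M]=[A:W_{m-1}]$, which is precisely the index condition at that position. Finally $\mathcal V$ and $\mathcal V^{*}$ share the top edge $A>V_{n-1}$ and their tails are maximal chains between $V_{n-1}$ and $B$, so they are connected by inductively provided steps, and in particular have equal length, giving $n=l+2=m$; likewise for $\mathcal W$ and $\mathcal W^{*}$. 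Concatenating $\mathcal V\rightsquigarrow\mathcal V^{*}\rightsquigarrow\mathcal W^{*}\rightsquigarrow\mathcal W$ proves the claim, with every changed index interior since $V_{n-1}\ne W_{m-1}$ forces $n\ge 2$.

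I expect the main obstacle to be the diamond step---proving that $V_{n-1}$ and $W_{m-1}$ both cover their intersection $M$---since this is where modularity of $S$ is genuinely used; the remaining bookkeeping (matching chain lengths, identifying which index changes, and checking $0<i<k$) is routine once the diamond and the product formula are in hand.
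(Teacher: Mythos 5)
Your proposal is correct and takes essentially the same route as the paper's proof: the same induction, the same case split on whether $V_{n-1}=W_{m-1}$, the same diamond construction through $M=V_{n-1}\cap W_{m-1}$ with an auxiliary maximal chain from $M$ down to $B$, and the same final concatenation of steps with the single swap of $V_{n-1}$ for $W_{m-1}$ at the top. The only difference is local and cosmetic: you establish that $V_{n-1}$ (and $W_{m-1}$) covers $M$ in $S$ via Dedekind's modular law, whereas the paper deduces the same covering from the index equality $[A:W_{m-1}]=[V_{n-1}:M]$ applied to an intermediate group $U$; both verifications are one-liners resting on the product formula.
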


\begin{proof}
We prove the result by induction on $\abs{A}$, noting that the result is vacuously true when $\abs{A}=0$.
Let $A$ be a finite group, and assume that the assertion holds for all
groups of order less than $\abs{A}$.  Pick any $B$ and $S$ as in the lemma, and
let $A=V_n>V_{n-1}> \dots > V_{0}=B$ and
 $A=W_m>W_{m-1}> \dots > W_{0}=B$ be two maximal increasing chains of groups in $S$.
If $V_{n-1}=W_{m-1}$ then the inductive hypothesis implies that the
chains $V_{n-1}>V_{n-2}>\dots>V_0$ and
$W_{m-1}>W_{m-2}>\dots>W_{0}$ satisfy the desired conclusion, so 
the desired conclusion also holds for
the two chains obtained by appending $A$ to both of these chains.
Henceforth we assume that $V_{n-1}\ne W_{m-1}$.  Then $V_{n-1}W_{m-1}$ is a group in $S$
which is strictly larger than at least one of $V_{n-1}$ or $W_{m-1}$, so the maximality of the chains
implies that $V_{n-1}W_{m-1}=A$, whence $[A:W_{m-1}]=[V_{n-1}:V_{n-1}\cap W_{m-1}]$.
 Let $U$ be a group in $S$ such that
$V_{n-1}\geq U>V_{n-1}\cap W_{m-1}$.  Then $S$ contains $UW_{m-1}$, and
$[UW_{m-1}:W_{m-1}]=[U:U\cap W_{m-1}]=[U:V_{n-1}\cap W_{m-1}]>1$.  Maximality of the chains
implies that $UW_{m-1}=A$, so that $[A:W_{m-1}]=[U:V_{n-1}\cap W_{m-1}]$, whence
$U=V_{n-1}$.  Now let $V_{n-1}\cap W_{m-1}=Y_r>\dots>Y_0=B$
be any maximal chain of groups in $S$ which lie between  $V_{n-1}\cap W_{m-1}$ and $B$.
Appending $V_{n-1}$ yields a maximal chain of groups in $S$ which lie between $V_{n-1}$ and
$B$, so by inductive hypothesis we can pass from this chain to the chain
$V_{n-1}>\dots>V_0=B$ by steps of the required type.  Therefore if we augment both chains
by appending $A$, we can still pass between these augmented chains via steps of the required
type.  The same argument shows that steps of the required type enable us to pass from
$A>W_{m-1}>\dots>W_0$ to $A>W_{m-1}>Y_r>\dots>Y_0$, which implies the
desired conclusion since the replacement of $A>W_{m-1}>Y_r>\dots>Y_0$ by
$A>V_{n-1}>Y_r>\dots>Y_0$  is a step of the required type.
\end{proof}

\begin{remark}
Our proof of Theorem~\ref{RittFirst} actually shows something slightly stronger, since we do not
need the monodromy group $G$ of $\phi$ to contain a transitive quasi-Hamiltonian subgroup.
What we actually need is that, if $H$ is a one-point stabilizer of $G$, then $G$ contains a
transitive subgroup $A$ with the property that $IJ=JI$ for all groups $I,J$ such that
$H\cap A\le I,J\le A$ and $HI=IH$ and $HJ=JH$. We do not know whether there are any
 natural situations in which Theorem~\ref{RittFirst} does not apply but this stronger version does.
\end{remark}

\begin{remark}\label{Rittrem}
In case $\phi$ is given by a polynomial $f(X)\in K[X]$ of degree not divisible by $\charp(K)$,
the monodromy group of $\phi$ contains a transitive cyclic subgroup by Lemma~\ref{noinertia},
so the conclusion of Theorem~\ref{RittFirst} holds.
In this case Theorem~\ref{RittFirst} is known as Ritt's First Theorem, and it was first proved by Ritt
for $K=\C$ \cite{R22}.  A different proof was given by
Engstrom \cite[Thm.~4.1]{Engstrom} in case $K$ is an arbitrary field of characteristic
zero, and Engstrom's proof extends at once to polynomials over any field with
$\charp(K)\nmid\deg(f)$ (cf.\
\cite[Thm.~4.1.34]{LauschN},
 \cite[Thm.~7]{Schinzel}, \cite[Thm.~5.11]{Binder}, \cite[Thm.~VII.5]{Wyman}).
Ritt's proof may be viewed as a special
case of the proof given above, although it is presented in a different language.
Alternate versions of Ritt's proof are given in \cite[Thm.~3.1]{FM69}
and \cite[Thm.~2]{DW74} for polynomials of degree less than $\charp(K)$
(see also \cite[Thm.~2.1 and Cor.~2.12]{ZM}).
Yet another version of Ritt's proof for polynomials in characteristic zero is given in \cite[Thm.~R.1]{Muller},
where it is noted that the transitive cyclic subgroup used in Ritt's proof can be replaced by a
transitive abelian subgroup.  A slightly weaker version of Theorem~\ref{RittFirst} is stated in
\cite[Cor.~1.5]{KLZ}.
\end{remark}


\section{The monodromy invariant}\label{Sec:Mon}

In the previous section we showed that, if $\phi$ is a cover of curves whose monodromy group
$\Mon(\phi)$
has a transitive quasi-Hamiltonian subgroup, then any two complete decompositions of $\phi$ have
the same length and the same collection of degrees of the involved indecomposable subcovers.
In this section we show that a stronger conclusion holds under a slightly more restrictive hypothesis:
specifically, if $\Mon(\phi)$ has a transitive {Dedekind} subgroup then any two complete 
decompositions
of $\phi$ have the same collection of monodromy groups of the involved indecomposable subcovers.
Here, as usual, the monodromy groups are viewed as permutation groups,  so that the degree of the
monodromy group equals the degree of the corresponding cover, and hence covers with isomorphic
monodromy groups have the same degrees as one another.

The main result of this section is as follows.

\begin{theorem}\label{MonThmfull}
Let $\phi\colon C\to D$ be a cover of curves over a field $K$, and suppose that the monodromy
group $\Mon(\phi)$  has a transitive Dedekind subgroup.
If $\phi=\phi_n\circ\phi_{n-1}\circ\dots\circ\phi_1$ and
$\phi=\psi_n\circ\psi_{n-1}\circ\dots\circ\psi_1$ are complete decompositions of $\phi$,
then there is a permutation $\pi$ of $\{1,2,\dots,n\}$ such that, for each $i=1,2,\dots,n$,
the groups $\Mon(\phi_i)$ and $\Mon(\psi_{\pi(i)})$ are isomorphic as permutation groups.
\end{theorem}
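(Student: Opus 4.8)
The plan is to reduce the statement, via the dictionary of Lemma~\ref{Decr.Incr}, to a purely group-theoretic claim and then to use Theorem~\ref{RittFirst} to pass between complete decompositions one elementary move at a time. Fix a one-point stabilizer $H$ in $G:=\Mon(\phi)$ and the transitive Dedekind subgroup $A\le G$. A complete decomposition corresponds to a maximal decreasing chain $G=G_n>G_{n-1}>\dots>G_0=H$, and the $i$-th indecomposable factor satisfies $\Mon(\phi_i)\cong G_i/\core_{G_i}(G_{i-1})$ acting on $G_i/G_{i-1}$, which is just the usual description of the monodromy of $K(C_{i-1})/K(C_i)$ as a permutation group. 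Since Theorem~\ref{RittFirst} lets us travel from one complete decomposition to any other through elementary moves, each altering the chain in a single slot, it suffices to show that such a move preserves the multiset of permutation groups $\{\Mon(\phi_i)\}$, for which I only need to compare the two factors that change.

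Unwinding the proof of Lemma~\ref{chains}, an elementary move is governed by a diamond of subgroups $P<M_1,M_2<T$ between $H$ and $G$ with $T=\langle M_1,M_2\rangle$, $P=M_1\cap M_2$, and $[M_1:P]=[T:M_2]$, the move replacing the local chain $\dots T>M_1>P\dots$ by $\dots T>M_2>P\dots$. Writing $a:=[T:M_1]=[M_2:P]$ and $b:=[M_1:P]=[T:M_2]$, the equality of indices gives $\abs{M_1}\,\abs{M_2}=\abs{T}\,\abs{P}$, so $M_1M_2=T$ as a set. Passing to the fixed fields $\Omega^{M_1},\Omega^{M_2},\Omega^{T},\Omega^{P}$ inside the Galois closure $\Omega$, this says that $\Omega^{M_1}$ and $\Omega^{M_2}$ are linearly disjoint over $\Omega^{T}$ with compositum $\Omega^{P}$; thus the cover for the pair $(M_2,P)$ is the base change of the cover for $(T,M_1)$ along $\Omega^{M_2}/\Omega^{T}$, and symmetrically for $(M_1,P)$ and $(T,M_2)$. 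Accordingly I will match the changed factors by $\Mon(T,M_1)\cong\Mon(M_2,P)$ (degree $a$) and $\Mon(M_1,P)\cong\Mon(T,M_2)$ (degree $b$), where $\Mon(U,W)$ denotes $U/\core_U(W)$ on $U/W$.

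Because $M_1M_2=T$, the group $M_2$ acts transitively on $T/M_1$ with point stabilizer $M_1\cap M_2=P$, so restricting the $\Mon(T,M_1)$-action to the image of $M_2$ realizes $\Mon(M_2,P)$ as a transitive subgroup of $\Mon(T,M_1)$, compatibly with the bijection $T/M_1\cong M_2/P$. This restriction is an isomorphism of permutation groups exactly when the image of $M_2$ exhausts $\Mon(T,M_1)$, i.e. when $\core_T(M_1)\,M_2=T$; intersecting with $M_1$ and using the modular law, this is equivalent to $M_1=\core_T(M_1)\,(M_1\cap M_2)$. In field language it asserts that the Galois closure of $\Omega^{M_1}/\Omega^{T}$ stays linearly disjoint from $\Omega^{M_2}$ over $\Omega^{T}$ — mere disjointness of $\Omega^{M_1}$ and $\Omega^{M_2}$ does \emph{not} force this for their closures, and this is precisely where the Dedekind hypothesis must enter. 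The mirror identity $\core_T(M_2)\,M_1=T$ supplies the other half of the matching.

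The main obstacle is thus to prove $\core_T(M_1)\,M_2=T$ (and its mirror) from the existence of the transitive Dedekind subgroup $A$. The strategy is to intersect the diamond with $A$: by Lemma~\ref{G1} the groups $M_{i,A}:=M_i\cap A$, $P_A:=M_1\cap M_2\cap A$, $T_A:=T\cap A$ form a diamond inside $A$ with $M_{1,A}M_{2,A}=T_A$, $M_{1,A}\cap M_{2,A}=P_A$, and $T=T_AH$. Since $A$ is Dedekind, each of these is normal in $A$, so $\core_{T_A}(M_{1,A})=M_{1,A}$: the associated geometric sub-cover is already Galois and taking its closure produces no enlargement — this is exactly the feature separating Dedekind from merely quasi-Hamiltonian groups. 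Granting $M_{1,A}\le\core_T(M_1)$, one obtains $\core_T(M_1)\,M_2\supseteq M_{1,A}M_2=M_{1,A}M_{2,A}H=T_AH=T$, as needed. I expect the real work to be this transfer of normality, namely showing that $M_1\cap A$ lies in $\core_T(M_1)$ even though $H$ need not normalize $A$; here the structure theory of Dedekind groups, not just the closure property of quasi-Hamiltonian groups, should be essential. Once both identities hold, every elementary move preserves the multiset of permutation-isomorphism types of the factors' monodromy groups, so this multiset is an invariant of $\phi$, and comparing it for the two given complete decompositions yields the permutation $\pi$.
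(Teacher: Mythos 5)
Your overall architecture is sound, and it in fact runs parallel to the paper's own proof: the reduction via Theorem~\ref{RittFirst} to a single elementary move, the identification of the factors' monodromy groups as $G_i/\core_{G_i}(G_{i-1})$ acting on cosets (the paper's Lemma~\ref{coremon}), and your criterion that the transitive embedding of $M_2/\core_{M_2}(P)$ into $T/\core_T(M_1)$ is onto if and only if $\core_T(M_1)\,M_2=T$ are all correct; that last condition is exactly the case ``$NV=G$'' of the paper's Lemma~\ref{MonLemma}. But there is a genuine gap at the single point where the Dedekind hypothesis must do its work: you never prove the inclusion $M_1\cap A\le\core_T(M_1)$. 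You write ``granting'' it, call it ``the real work,'' and speculate that the structure theory of Dedekind groups will be needed. Since every other step of your argument uses only the quasi-Hamiltonian property (under which the paper explicitly does not know whether the theorem holds --- see the remark following the proof of Proposition~\ref{MonProp}), the proposal as written does not prove the theorem.

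The missing inclusion is true, and its proof is short --- it is precisely the computation in the paper's proof of Proposition~\ref{MonProp}, transported to your relative setting; no classification of Dedekind groups is needed, only the defining property that every subgroup of $A$ is normal in $A$. By Lemma~\ref{G1} we have $T=(T\cap A)H$, hence also $T=H(T\cap A)$ by taking inverses, and since $H\le M_1$ this gives $T=M_1(T\cap A)$. Thus every $g\in T$ factors as $g=ma$ with $m\in M_1$ and $a\in T\cap A$, so $M_1^{g}=M_1^{a}$, and therefore
\[
\core_T(M_1)=\bigcap_{g\in T}M_1^{g}=\bigcap_{a\in T\cap A}M_1^{a}\supseteq\bigcap_{a\in T\cap A}(M_1\cap A)^{a}=M_1\cap A,
\]
where the last equality holds because $M_1\cap A$ is normal in $A$ and $T\cap A\le A$. (In particular your worry that ``$H$ need not normalize $A$'' is beside the point: one only needs that conjugation by $T$ reduces to conjugation by $T\cap A$.) With this inclusion in hand, your computation $\core_T(M_1)M_2\supseteq(M_1\cap A)M_{2,A}H=T_AH=T$ and its mirror image go through, and the rest of your argument correctly completes the proof.
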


\begin{remark} As noted above, covers with isomorphic monodromy groups must have the same degree.
We will show in Lemma~\ref{symlem} that two such covers must also have isomorphic automorphism groups,
so that Theorem~\ref{mainthm} follows from Theorem~\ref{MonThmfull} if $\Mon(\phi)$ has a transitive
Dedekind subgroup.
\end{remark}

In light of Theorem~\ref{RittFirst}, it suffices to prove Theorem~\ref{MonThmfull} when $n=2$.
In fact we will prove the following refinement of the case $n=2$ of Theorem~\ref{MonThmfull}:

\begin{prop}\label{MonThm}
Let $\phi\colon C\to D$ be a cover of curves over a field $K$, and suppose that the monodromy
group $\Mon(\phi)$ has a transitive Dedekind subgroup.  If $\phi=\phi_2\circ\phi_1$ and
$\phi=\psi_2\circ\psi_1$ are inequivalent complete decompositions of $\phi$, then
$\Mon(\phi_2)\cong\Mon(\psi_1)$ (as permutation groups) and likewise
 $\Mon(\phi_1)\cong\Mon(\psi_2)$.\end{prop}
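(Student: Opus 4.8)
The plan is to translate everything into group theory via Lemma~\ref{Decr.Incr} and Lemma~\ref{G1}, and then exploit the regular action furnished by the Dedekind subgroup. Write $G=\Mon(\phi)$, fix a one-point stabilizer $H\le G$ and a transitive Dedekind subgroup $A\le G$, and set $B:=H\cap A$. Since $A$ is Dedekind we have $B\trianglelefteq A$, and because $A$ is transitive with stabilizer $B$ its action on $G/H$ is the regular representation of $\overline A:=A/B$. The two complete decompositions correspond to maximal chains $G>U>H$ and $G>W>H$; put $J:=U\cap A$ and $J':=W\cap A$, so that by Lemma~\ref{G1} we have $U=JH$, $W=J'H$, $[G:U]=[A:J]$ and $[G:W]=[A:J']$. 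In this language $\Mon(\phi_2)=G/\core_G(U)$ acting on $G/U$ and $\Mon(\phi_1)=U/\core_U(H)$ acting on $U/H$, and similarly for $\psi_1,\psi_2$ with $W$ in place of $U$.

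First I would pin down the lattice-theoretic shape forced by completeness. The subgroups $\overline J:=J/B$ and $\overline{J'}:=J'/B$ are normal in $\overline A$ (Dedekind), and completeness makes each of them simultaneously a minimal and a maximal member of the poset of subgroups produced by Lemma~\ref{G1}. Since that poset is closed under intersection and join, with $\rho(U\cap W)=J\cap J'$ and $\rho(\langle U,W\rangle)=\langle J,J'\rangle$ by Lemma~\ref{G1}, and the decompositions are inequivalent (so $U\ne W$, i.e.\ $\overline J\ne\overline{J'}$), minimality and maximality force $\overline J\cap\overline{J'}=1$ and $\overline J\,\overline{J'}=\overline A$. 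Hence $\overline A=\overline J\times\overline{J'}$; equivalently $J\cap J'=B$ (so $U\cap W=H$) and $JJ'=A$ (so $\langle U,W\rangle=G$ and $UW=G$). In particular the degrees already match: $\deg\phi_2=[A:J]=\lvert\overline{J'}\rvert=[J':B]=\deg\psi_1$, and symmetrically $\deg\phi_1=\deg\psi_2$. The relation $UW=G$ with $U\cap W=H$ also yields the $W$-equivariant bijection $W/H\xrightarrow{\sim}G/U$, $wH\mapsto wU$ (and symmetrically $U/H\cong G/W$), which identifies $\Mon(\psi_1)$ with the image of $W$ inside $\Mon(\phi_2)$ acting on $G/U$.

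The heart of the proof, and the step where the Dedekind hypothesis (as opposed to merely quasi-Hamiltonian) is indispensable, is to show that this image is all of $\Mon(\phi_2)$, i.e.\ that passing from $\phi_2$ to its ``base change'' $\psi_1$ does not shrink the monodromy group; group-theoretically this is the assertion $W\cdot\core_G(U)=G$. I would prove it by computing the kernel of the $A$-action on the block system $G/U$: because $\overline J\trianglelefteq\overline A$ and $\overline A$ acts regularly, an element of $A$ fixes every block exactly when its image lies in $\overline J$, so this kernel equals $J$; but the kernel is also $A\cap\core_G(U)$, whence $J\subseteq\core_G(U)$. This is precisely where normality of $\overline J$ enters—without it the kernel would be the smaller $\core_{\overline A}(\overline J)$ and the argument collapses, which is exactly why the degree-only conclusion of Theorem~\ref{RittFirst} cannot be upgraded under the weaker hypothesis. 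Granting $J\subseteq\core_G(U)$, the group $W\,\core_G(U)$ (a subgroup, as $\core_G(U)\trianglelefteq G$) contains $W$, $J$ and $J'$, hence contains $\langle J,J',H\rangle=AH=G$; thus $W\,\core_G(U)=G$ and the image of $W$ on $G/U$ is all of $G/\core_G(U)=\Mon(\phi_2)$. Combined with the $W$-set identification this gives $\Mon(\psi_1)\cong\Mon(\phi_2)$ as permutation groups. The symmetric computation, using $J'\subseteq\core_G(W)$ and $U\,\core_G(W)=G$ together with the $U$-set isomorphism $U/H\cong G/W$, yields $\Mon(\phi_1)\cong\Mon(\psi_2)$. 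The main obstacle is this non-shrinking step; everything else is bookkeeping with Lemmas~\ref{Decr.Incr} and~\ref{G1}.
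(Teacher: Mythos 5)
Your proposal is correct: every step checks out, and it rests on the same two pillars as the paper's own proof. The translation via Lemmas~\ref{Decr.Incr} and~\ref{G1}, the use of maximality plus inequivalence to force $U\cap W=H$ and $UW=G$, and above all the Dedekind computation $U\cap A=\core_A(U\cap A)\le\core_G(U)$ are exactly the paper's ingredients (that last containment is the entire content of the paper's proof of Proposition~\ref{MonProp}). What differs is the assembly. The paper argues in two stages: Lemma~\ref{MonLemma} shows, under the weaker quasi-Hamiltonian hypothesis, that either the desired permutation isomorphism holds (built from $\core_G(U)\cdot W=G$, which is obtained there from maximality of the chain through $W$, via the second isomorphism theorem), or else $\core_G(U)=\core_W(H)=\core_G(H)$; Proposition~\ref{MonProp} then invokes the Dedekind hypothesis only to eliminate the degenerate branch by contradiction. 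You instead use the containment $J=U\cap A\le\core_G(U)$ \emph{positively}: it yields $W\,\core_G(U)\supseteq\langle J,J',H\rangle=G$ at once, so the image of $W$ on $G/U$ is all of $\Mon(\phi_2)$, and your $W$-equivariant bijection $W/H\to G/U$ finishes the argument without any case analysis. Your route is somewhat more direct and gives the sharper statement $A\cap\core_G(U)=J$; the paper's split has the advantage that the quasi-Hamiltonian Lemma~\ref{MonLemma} is reusable, and indeed it is invoked again in the proof of Theorem~\ref{tard}, where only the weaker hypothesis is available. One small caveat: your aside that the conclusion ``cannot be upgraded'' under the quasi-Hamiltonian hypothesis overstates matters --- what collapses is this particular argument, and the paper explicitly records as an open question (in the remark following Proposition~\ref{MonProp}) whether the proposition remains true in that setting.
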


In order to prove Proposition~\ref{MonThm}, we first translate it into a group-theoretic statement.
This requires the following terminology.

\begin{definition}
If $W$ is a subgroup of a group $G$, then the \emph{core} of $W$ in $G$ is the largest normal
subgroup of $G$ which is contained in $W$, and is denoted $\core_G(W)$.
\end{definition}

\begin{remark}
Two basic properties of cores are as follows: first, 
$\core_G(W)=\cap_{g\in G}W^g$, where $W^g:=g^{-1}Wg$.  Second, $\core_G(W)$ is the kernel
of the homomorphism $G\to\Sym(G/W)$ induced by the action of $G$ by left multiplication on the set
$G/W$ of left cosets of $W$ in $G$.
\end{remark}

Next we use cores to describe the monodromy groups of the subcovers occurring in a decomposition
of a cover.

\begin{lemma}\label{coremon}
Let $\phi\colon C\to D$ be a cover of curves over a field $K$, and let $\phi=\phi_n\circ\phi_{n-1}\circ
\dots\circ\phi_1$ be a decomposition of $\phi$, where $\phi_i\colon C_{i-1}\to C_i$.
Write $G_i:=\Gal(\Omega/K(C_i))$, where $\Omega$ is the Galois closure of $K(C)/K(D)$.
Then $\Mon(\phi_i)$ is isomorphic as a permutation group to the group $G_i/\core_{G_i}(G_{i-1})$
in its action on the set of left cosets of $G_{i-1}$ in $G_i$.
\end{lemma}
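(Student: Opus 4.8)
The plan is to work entirely inside the fixed Galois closure $\Omega$, translating the statement through the Galois correspondence for the extension $\Omega/K(C_i)$, and then to match the resulting coset action with the permutation representation that defines $\Mon(\phi_i)$. Throughout I would write $F:=K(C_i)$ and $M:=K(C_{i-1})$, so that $\phi_i$ is the cover attached to the extension $M/F$ and $\deg\phi_i=[M:F]=[G_i:G_{i-1}]$.

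First I would record the basic setup. Since $\Omega/K(D)$ is Galois and $K(D)\subseteq F$, the extension $\Omega/F$ is again Galois, with group $G_i=\Gal(\Omega/F)$; the intermediate field $M$ corresponds under the Galois correspondence to the subgroup $G_{i-1}=\Gal(\Omega/M)\le G_i$. By definition $\Mon(\phi_i)$ is the Galois group of the Galois closure of $M/F$, viewed as a permutation group on the set of embeddings of $M$ into a fixed algebraic closure $\bar{F}$ of $F$ which fix $F$ pointwise.

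The key step is to realize this Galois closure as a subfield of $\Omega$. I would take $\bar{F}$ to contain $\Omega$; then, since $\Omega/F$ is normal and $M\subseteq\Omega$, every $F$-embedding of $M$ into $\bar{F}$ is the restriction of some element of $G_i$, and two elements of $G_i$ restrict to the same embedding of $M$ exactly when they lie in the same left coset of $G_{i-1}$. Thus the set of these embeddings is in natural bijection with $G_i/G_{i-1}$. The Galois closure $N$ of $M/F$ inside $\Omega$ is the compositum of the conjugate fields $\sigma(M)$ for $\sigma\in G_i$; since $\sigma(M)$ is fixed by $\sigma G_{i-1}\sigma^{-1}$, the field $N$ is fixed by $\bigcap_{\sigma\in G_i}\sigma G_{i-1}\sigma^{-1}$, which equals $\core_{G_i}(G_{i-1})$ by the first recorded property of cores. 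Hence $\Gal(\Omega/N)=\core_{G_i}(G_{i-1})$ and $\Gal(N/F)\cong G_i/\core_{G_i}(G_{i-1})$.

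Finally I would check that the permutation actions agree. Under the bijection above, the action of $\Gal(N/F)$ on the $F$-embeddings of $M$ translates into the action of $G_i/\core_{G_i}(G_{i-1})$ by left multiplication on the left cosets of $G_{i-1}$ in $G_i$; faithfulness of this action is precisely the statement that $\core_{G_i}(G_{i-1})$ is the kernel of the map $G_i\to\Sym(G_i/G_{i-1})$, which is the second recorded property of cores. This yields the claimed isomorphism of permutation groups. The main obstacle I anticipate is the bookkeeping in the key step: confirming that the abstractly defined Galois closure of $M/F$ really coincides with $N\subseteq\Omega$ and that restriction of automorphisms gives the stated bijection between embeddings and cosets. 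Once normality of $\Omega/F$ is invoked this is routine, but it is the point where care is needed.
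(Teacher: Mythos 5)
Your proof is correct and takes essentially the same route as the paper's: both realize the Galois closure of $K(C_{i-1})/K(C_i)$ inside $\Omega$ as the fixed field of $\core_{G_i}(G_{i-1})$, and both use normality of $\Omega/K(C_i)$ to identify the $F$-embeddings of $K(C_{i-1})$ with the left cosets $G_i/G_{i-1}$ compatibly with the group actions. Your write-up simply makes explicit the bookkeeping (conjugate fields, compositum, faithfulness of the coset action) that the paper's three-sentence proof leaves implicit.
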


\begin{proof}
By definition, $\Mon(\phi_i)$ is the Galois group of the Galois closure of $K(C_{i-1})/K(C_i)$, and hence
as an abstract group $\Mon(\phi_i)\cong G_i/\core_{G_i}(G_{i-1})$.  We view $\Mon(\phi_i)$ as a group of
permutations of the set $\Lambda$ of homomorphisms $K(C_{i-1})\to\bar{K(C_i)}$ which 
restrict to the identity on $K(C_i)$.  The image of any such homomorphism is contained in $\Omega$ (since
$\Omega/K(C_i)$ is normal), so we can identify $\Lambda$ with $G_i/G_{i-1}$ without changing the action
of $\Mon(\phi_i)$.
\end{proof}

In combination with Lemma~\ref{Decr.Incr}, this lemma shows that Proposition~\ref{MonThm}
is a consequence of the following result.

\begin{prop}\label{MonProp}
Let $G$ be a permutation group with a transitive Dedekind subgroup $A$,
and let $H$ be a one-point stabilizer of $G$.  If $G>U>H$ and $G>V>H$ are distinct maximal decreasing
chains of groups between $G$ and $H$, then
$G/\core_G(U)$ (in its action as a permutation group on $G/U$) is isomorphic to $V/\core_V(H)$ 
(in its action as a permutation group on $V/H$).
\end{prop}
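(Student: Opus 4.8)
The plan is to exploit the Dedekind hypothesis to force $A$ to act regularly, to split $A$ as a direct product governed by the two chains, and then to identify both permutation groups in the conclusion with one and the same subgroup of $\Sym(A\cap V)$.

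First I would record the structure forced by the hypotheses. Writing $B:=A\cap H$, transitivity of $A$ gives $G=AH$ and identifies the $G$-set $G/H$ with the $A$-set $A/B$; since $A$ is Dedekind, $B$ is normal in $A$, so $B$ lies in the kernel of the (faithful) action of $A$ on $G/H$, whence $\core_A(B)=B=1$. Thus $A$ is regular on $G/H$. Next put $U':=U\cap A$ and $V':=V\cap A$. By Lemma~\ref{G1} the map $W\mapsto W\cap A$ is a bijection respecting joins and meets from the groups between $G$ and $H$ onto the groups $J$ between $A$ and $B=1$ with $JH=HJ$, so the two distinct maximal chains $G>U>H$ and $G>V>H$ correspond to distinct maximal chains $A>U'>1$ and $A>V'>1$ in this sublattice. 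Since $U',V'$ are normal in $A$, the join $U'V'$ and the meet $U'\cap V'$ again lie in the sublattice, and maximality forces $U'V'=A$ and $U'\cap V'=1$, i.e. $A=U'\times V'$.

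With this in hand I would set up a common model for the two sets. From $U=U'H$ and $V=V'H$ (the identity $W=(W\cap A)H$ established in the proof of Lemma~\ref{G1}) together with $A\cap H=1$, one checks that $V'$ meets both $U$ and $H$ trivially and that $G=V'U$ and $V=V'H$; hence $v'\mapsto v'U$ and $v'\mapsto v'H$ are bijections $V'\to G/U$ and $V'\to V/H$, under which $V'$ acts by left translation in both cases. It then remains to compare the full images. As $G=U'V'H$ and $V=V'H$, the images of $G$ on $G/U$ and of $V$ on $V/H$ are generated by the images of $U',V',H$ and of $V',H$, respectively, so I would show these images coincide by verifying that (i) $U'$ acts trivially on $G/U$, which is immediate from $u'v'=v'u'$ in the direct product $A=U'\times V'$ together with $U'\le U$, and (ii) for $h\in H$ and $v'\in V'$ the element $hv'$ lies in $V$, and its $V'$-part in the unique factorization $V=V'H$ governs the action on both $V/H$ and $G/U$, the latter because $H\le U$. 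Consequently both images equal the subgroup of $\Sym(V')$ generated by the left translations by $V'$ and the permutations induced by $H$, giving the desired isomorphism of permutation groups between $G/\core_G(U)$ on $G/U$ and $V/\core_V(H)$ on $V/H$.

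The main obstacle is this final comparison of images: everything reduces to showing that, after the common identification with $V'$, the actions of $G$ on $G/U$ and of $V$ on $V/H$ produce literally the same permutations. The two facts that make this work are the decomposition $A=U'\times V'$ (so that $U'$ is invisible on $G/U$) and the compatibility of the $H$-action coming from $H\le U$; establishing $A\cap H=1$ and $A=U'\times V'$ is thus the structural heart that powers the conclusion.
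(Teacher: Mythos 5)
Your proof is correct, but it takes a genuinely different route from the paper's. The paper argues in two stages: first, Lemma~\ref{MonLemma}, proved under the weaker hypothesis that $A$ is quasi-Hamiltonian, establishes a dichotomy --- either the desired isomorphism of permutation groups holds, or $\core_G(U)=\core_V(H)=\core_G(H)$ --- by exploiting normality of $N:=\core_G(U)$ in $G$ (so $NV$ is a group, forcing $NV=V$ or $NV=G$ by maximality) together with the second isomorphism theorem $V/(N\cap V)\cong NV/N$; second, the Dedekind hypothesis is invoked exactly once, to kill the degenerate branch, via $\core_G(U)\ge\core_A(U\cap A)=U\cap A$, which would give $U=H(U\cap A)\le H$, a contradiction. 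You instead use the Dedekind property (together with faithfulness of $G$) at the outset, to prove that $A$ is regular and that $A=U'\times V'$ with $U':=U\cap A$, $V':=V\cap A$, and then you build a common concrete model: both coset spaces are identified with $V'$, and the two induced permutation groups are checked to be literally the same subgroup of $\Sym(V')$, namely the one generated by left translations by $V'$ and the permutations induced by $H$, with $U'$ acting trivially on $G/U$. Your key verification --- that $h\in H$ induces the same permutation of $V'$ through both identifications, because the $H$-part of $hv'$ in the unique factorization $V=V'H$ is absorbed into $U\supseteq H$ --- is exactly right, and the structural facts $H\cap A=1$, $U'\cap V'=1$, $U'V'=A$ do follow from Lemma~\ref{G1} plus maximality and distinctness of the chains. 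As for what each approach buys: yours is self-contained and more vivid, showing the two permutation representations become \emph{equal} (not merely isomorphic) after canonical identifications of both underlying sets with $V'$; the paper's is modular --- Lemma~\ref{MonLemma} needs only a quasi-Hamiltonian subgroup and is reused in the proof of Theorem~\ref{tard} --- and it never uses $H\cap A=1$ or faithfulness (only $\core_G(H)\le H$), so it would survive in an abstract-group formulation, whereas your regularity step and the direct-product decomposition genuinely require both the Dedekind hypothesis and the faithfulness built into the word ``permutation group.''
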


The following lemma exhibits the portion of Proposition~\ref{MonProp} which we can prove under
the weaker hypothesis that $G$ has a transitive quasi-Hamiltonian subgroup.

\begin{lemma}\label{MonLemma}
Let $G$ be a permutation group which has a transitive quasi-Hamiltonian subgroup $A$,
and let $H$ be a one-point stabilizer of $G$.  If $G>U>H$ and $G>V>H$ are distinct maximal decreasing
chains of groups between $G$ and $H$, then $N:=\core_G(U)$ and
$C:=\core_V(H)$ satisfy either
$G/N\cong V/C$ or $N=C=\core_G(H)$.
\end{lemma}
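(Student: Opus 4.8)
The plan is to reduce the whole statement to two facts: that $V$ acts on the coset space $G/U$ in the same way it acts on $V/H$, and that this forces the single identity $V\cap N=C$. First I would record the lattice shape of the two chains. Since they are distinct but share the endpoints $G$ and $H$, they differ in the middle, so $U\ne V$; and since $G>V>H$ and $G>U>H$ are maximal chains, each of $U,V$ is maximal in $G$ and minimal over $H$. Minimality over $H$ gives $U\not\le V$ and $V\not\le U$, so $H\le U\cap V<U$ and minimality forces $U\cap V=H$; dually, maximality of $U$ in $G$ together with $V\not\le U$ gives $\langle U,V\rangle=G$.

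The crucial step is to show $VU=G$, i.e. that $V$ is transitive on $G/U$. This is where the transitive quasi-Hamiltonian subgroup $A$ is used. Writing $U_A:=U\cap A$ and $V_A:=V\cap A$ and applying the join-preserving bijection $\rho$ of Lemma~\ref{G1} to $\langle U,V\rangle=G$ yields $\langle U_A,V_A\rangle=A$; since $A$ is quasi-Hamiltonian its subgroups permute, so $U_A$ and $V_A$ actually \emph{factor} $A$, giving $V_A U_A=A$. Transitivity of $A$ gives $G=AH=AU$ (as $H\le U$), and then $A=V_A U_A\subseteq VU$ combined with $G=AU$ gives $G=AU\subseteq (VU)U=VU$, so $VU=G$.

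Next I would identify the actions. The stabilizer in $V$ of the coset $U\in G/U$ is $V\cap U=H$, and $V$ is transitive on $G/U$ by the previous step, so the $V$-set $G/U$ is isomorphic to the $V$-set $V/H$ via $vU\mapsto vH$. Equivalent actions have equal kernels: the kernel of $V$ on $G/U$ is $V\cap N$ (an element $v\in V$ acts trivially iff $v\in\bigcap_g U^g=N$), while the kernel of $V$ on $V/H$ is $C=\core_V(H)$; hence $V\cap N=C$. Moreover $V/(V\cap N)\cong VN/N$ is a transitive subgroup of $G/N$ acting on $G/U$. Finally I would split into cases using that $V$ is maximal in $G$ and $N\trianglelefteq G$, so $VN\in\{V,G\}$. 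If $VN=G$, then $G/N=VN/N\cong V/(V\cap N)=V/C$, and since both sides act on the same set $G/U$ this is an isomorphism of permutation groups, the first alternative. If instead $VN=V$, then $N\le V$, so $N\le U\cap V=H$; as $\core_G(H)\le H\le U$ is always contained in $N=\core_G(U)$, this gives $N=\core_G(H)$, and $C=V\cap N=N$, the second alternative.

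The one genuinely delicate point — the main obstacle — is the equality $VU=G$, because $U$ and $V$ need not permute inside $G$, so there is no a priori factorization there. The entire purpose of the quasi-Hamiltonian hypothesis is to supply the honest factorization $V_A U_A=A$ inside the transitive subgroup $A$, which then propagates up to $VU=G$; once transitivity of $V$ on $G/U$ is in hand, the remainder is the formal orbit–stabilizer identification of $V$-sets and a routine normal-core case analysis.
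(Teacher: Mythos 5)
Your proof is correct and follows essentially the same route as the paper's: derive $U\cap V=H$ and $\langle U,V\rangle=G$ from maximality, use Lemma~\ref{G1} together with the quasi-Hamiltonian property of $A$ to obtain the factorization $VU=G$, identify $C=V\cap N$, and then split into cases according to whether $NV$ equals $V$ or $G$, applying the second isomorphism theorem in the latter case. The only differences are presentational — you obtain $C=V\cap N$ and the permutation-group structure via the $V$-set isomorphism $G/U\cong V/H$ (equal kernels), where the paper computes the cores directly and uses $NH=U$ to match stabilizers — and both are sound.
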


\begin{proof}
By Lemma~\ref{G1} we have $U=HI=IH$ and $V=HJ=JH$ where $I:=U\cap A$ and $J:=V\cap A$,
and also $U\cap V=H(I\cap J)$ and $\langle U,V\rangle=H\langle I,J\rangle$.
Maximality (and distinctness) of the chains implies that $U\cap V=H$ and $\langle U,V\rangle=G$,
so that $I\cap J=H\cap A$ and $\langle I,J\rangle=A$.  Since $A$ is quasi-Hamiltonian we have
$IJ=\langle I,J\rangle=A$, and since $HI=IH$ and $HJ=JH$ we find that $UV=HIHJ=HIJ=HA=G$.
The facts that $U\cap V=H$ and $UV=G$ imply that
\begin{align*}
C=\bigcap_{g\in V} H^{g}&=\bigcap_{g\in V} (U\cap V)^{g}=\bigl(\bigcap_{g\in V} U^{g}\bigr) \cap V\\
&=\bigl(\bigcap_{g\in G} U^{g}\bigr) \cap V=N\cap V.
\end{align*}
Since $N$ is normal in $G$, we know that $NV$ is a subgroup of $G$,
so maximality of the chain $G>V>H$ implies that either $NV=V$ or $NV=G$.
If $NV=V$ then $V\ge N$ so $C=N\cap V=N$, whence $C=N=\core_G(H)$.
  Finally, suppose that $NV=G$.  
    Since $N\cap V=C$ is a normal subgroup of $V$ and $NH=U$ (by maximality),
the natural map $V/(N\cap V)\to NV/N$ is an isomorphism of permutation groups $V/C\cong G/N$.
 \end{proof}

We now prove Proposition~\ref{MonProp}, which as we have seen implies Proposition~\ref{MonThm}
and Theorem~\ref{MonThmfull}.  In the notation of Lemma~\ref{MonLemma}, all that must be shown is
that if $A$ is Dedekind then $N\ne C$.

\begin{proof}[Proof of Proposition~\ref{MonProp}]
We may assume that $\core_G(U)=\core_V(H)=\core_G(H)$, since otherwise
Lemma~\ref{MonLemma} implies the desired conclusion.
Since $G=HA$ and $U>H$ we have $G=UA$, so that
\[
\core_G(U)=\bigcap_{g \in G} U^g= \bigcap_{g \in A} U^g \geq \bigcap_{g\in A} (U \cap A)^g = 
\core_A(U\cap A).
\]
But $\core_A(U\cap A)=U\cap A$ (since $A$ is Dedekind), 
so
$U\cap A\le\core_G(U)=\core_G(H)$, which yields the contradiction
$U=H(U\cap A)\le H$.
\end{proof}

\begin{remark}
We do not know whether Theorem~\ref{MonThmfull} and Proposition~\ref{MonThm}
would remain true if we assumed only that
$\Mon(\phi)$ has a transitive quasi-Hamiltonian subgroup, rather than a transitive Dedekind
subgroup.  Any counterexample to this generalization of Proposition~\ref{MonThm} would have $N=C=1$
in the notation of Lemma~\ref{MonLemma} (since $G:=\Mon(\phi)$ is faithful so that $\core_G(H)=1$),
but we do not know whether this can happen.
We note that the proof of Proposition~\ref{MonProp} shows that this cannot happen if every
minimal nontrivial subgroup of $A$ is normal.
\end{remark}

\begin{remark}
Proposition~\ref{MonThm} was first proved for complex polynomials
as a step in the proof of \cite[Thm.~R.2]{Muller}; in this case \cite[Thm.~2.13]{ZM} shows
that the conclusion holds if we replace the hypothesis that the decompositions are inequivalent and
complete by the hypothesis that $\gcd(\deg \phi_2,\deg\psi_2)=1=\gcd(\deg\phi_1,\deg\psi_1)$.
Theorem~\ref{MonThmfull} was first proved for complex polynomials in \cite[Thm.~1.3]{ZM}.
\end{remark}


\section{Automorphism groups of covers}\label{Sec:BNg}

In this section we examine the automorphism group of a cover $\phi\colon C\to D$,
and show that if the monodromy group of $\phi$ has a transitive quasi-Hamiltonian subgroup
then the collection of automorphism groups of the
indecomposable covers in a complete decomposition of $\phi$ is uniquely determined by $\phi$.

\begin{definition} If $\phi\colon C\to D$ is a cover of curves over a field $K$, then an \emph{automorphism}
of $\phi$ is an automorphism $\sigma$ of $C$ which is defined over $K$ and which satisfies
$\phi\circ\sigma=\phi$.
\end{definition}

We write $\Aut(\phi)$ to denote the set of all automorphisms of $\phi$, and note that $\Aut(\phi)$
is a group under the operation of composition.  We will prove the following result.

\begin{theorem} \label{tard}
Let $\phi\colon C\to D$ be a cover of curves over a field $K$, and suppose that the monodromy group
$\Mon(\phi)$ has a transitive quasi-Hamiltonian subgroup.
If $\phi=\phi_n\circ\phi_{n-1}\circ\dots\circ\phi_1$ and $\phi=\psi_n\circ\psi_{n-1}\circ\dots\circ\psi_1$
are complete decompositions of $\phi$,
then there is a permutation $\pi$ of
$\{1,2,\dots,n\}$ such that, for each $i$ with $1\le i\le n$, we have
$\deg\phi_i=\deg\psi_{\pi(i)}$ and $\Aut(\phi_i)\cong\Aut(\psi_{\pi(i)})$.
\end{theorem}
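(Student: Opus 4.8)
The plan is to reduce Theorem~\ref{tard} first to a single adjacent swap, then to a purely group-theoretic statement about two maximal chains in the monodromy group, and finally to an equality of orders of two normalizer quotients.

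\emph{Reduction via Ritt.} By Theorem~\ref{RittFirst}, the quasi-Hamiltonian hypothesis lets us pass from one complete decomposition to any other through finitely many steps, each replacing two adjacent indecomposable covers $\theta_2\circ\theta_1$ by $\hat\theta_2\circ\hat\theta_1$ with $\theta_2\circ\theta_1=\hat\theta_2\circ\hat\theta_1$ and with the same unordered pair of degrees. The conclusion of Theorem~\ref{tard} is precisely that the multiset of pairs $(\deg\phi_i,\Aut(\phi_i))$ (up to isomorphism) is an invariant of $\phi$, and a single step alters only the two adjacent factors, so it suffices to prove the case $n=2$: if $\phi=\theta_2\circ\theta_1=\hat\theta_2\circ\hat\theta_1$ with all four factors indecomposable and $\{\deg\theta_1,\deg\theta_2\}=\{\deg\hat\theta_1,\deg\hat\theta_2\}$, then the multiset $\{(\deg\theta_i,\Aut(\theta_i))\}$ equals $\{(\deg\hat\theta_i,\Aut(\hat\theta_i))\}$. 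When $\deg\theta_1\ne\deg\theta_2$ the degree matching forces the diagonal pairing $\Aut(\theta_2)\cong\Aut(\hat\theta_1)$ and $\Aut(\theta_1)\cong\Aut(\hat\theta_2)$; when all four degrees coincide we need only that the two automorphism groups on each side agree as a multiset, which the same argument will deliver.

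\emph{Translation and the key injection.} Via Lemma~\ref{Decr.Incr} and Lemma~\ref{coremon}, the two factorizations correspond to distinct maximal chains $G>U>H$ and $G>V>H$ in $G:=\Mon(\phi)$, where $H$ is a one-point stabilizer, and the usual Galois-theoretic computation identifies the automorphism groups with normalizer quotients: $\Aut(\theta_2)\cong N_G(U)/U$, $\Aut(\theta_1)\cong N_U(H)/H$, $\Aut(\hat\theta_1)\cong N_V(H)/H$, $\Aut(\hat\theta_2)\cong N_G(V)/V$. I would record, as a preliminary Lemma~\ref{symlem}, that $N_G(W)/W$ is the centralizer of the image of $G$ in $\Sym(G/W)$ and hence depends only on the permutation-isomorphism class of the action on $G/W$; equivalently, covers with isomorphic monodromy groups have isomorphic automorphism groups. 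As in the proof of Lemma~\ref{MonLemma}, maximality gives $U\cap V=H$ and $UV=G$, so $vH\mapsto vU$ is an isomorphism of $V$-sets $V/H\cong G/U$ and realizes $\Mon(\hat\theta_1)$ as a subgroup of $\Mon(\theta_2)$ inside $\Sym(G/U)$. Passing to centralizers in $\Sym(G/U)$ then yields a canonical injection $\Aut(\theta_2)\hookrightarrow\Aut(\hat\theta_1)$, and symmetrically $\Aut(\hat\theta_2)\hookrightarrow\Aut(\theta_1)$. Each injection is an isomorphism exactly when the two finite groups have equal order, i.e.\ when $[N_G(U):U]=[N_V(H):H]$.

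\emph{Closing the gap.} To turn these injections into isomorphisms I would invoke Lemma~\ref{MonLemma}. In its principal case $G/\core_G(U)\cong V/\core_V(H)$ as permutation groups, isomorphic permutation groups have centralizers of equal order, so the injection $\Aut(\theta_2)\hookrightarrow\Aut(\hat\theta_1)$ is onto; the symmetric statement gives $\Aut(\theta_1)\cong\Aut(\hat\theta_2)$, completing the diagonal matching. The remaining, and main, obstacle is the degenerate alternative of Lemma~\ref{MonLemma}, where $\core_G(U)=\core_V(H)=\core_G(H)$ and the two monodromy groups are genuinely distinct subgroups of $\Sym(G/U)$, so the desired isomorphism cannot be read off from Lemma~\ref{symlem} (and, as noted in the remark following Proposition~\ref{MonProp}, it is not even known whether this case can occur). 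My plan here is to pass to the faithful quotient by $\core_G(H)$ and then establish the order equality $[N_G(U):U]=[N_V(H):H]$ directly from the quasi-Hamiltonian structure: writing $U=IH$ and $V=JH$ with $I:=U\cap A$, $J:=V\cap A$, $IJ=JI=A$, and $I\cap J=H\cap A$, I would compute both indices inside $A$ using Lemma~\ref{G1} — for instance $[N_G(U):U]=[\,N_G(U)\cap A:I\,]$ — and show the two indices coincide because $I$ and $J$ permute. Equivalently, one must show that $U$ and $H$ fix exactly the same points of $G/U$; proving this last equality is the crux of the argument and is precisely where the quasi-Hamiltonian hypothesis must be used in full.
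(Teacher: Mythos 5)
Your reduction to $n=2$ via Theorem~\ref{RittFirst}, the translation into two maximal chains $G>U>H$ and $G>V>H$, and your handling of the principal case of Lemma~\ref{MonLemma} all track the paper's own proof; the injections $\Aut(\theta_2)\hookrightarrow\Aut(\hat\theta_1)$ and $\Aut(\hat\theta_2)\hookrightarrow\Aut(\theta_1)$ coming from the $V$-equivariant bijection $V/H\cong G/U$ are correct and are a nice way to organize the argument. But there is a genuine gap exactly where you say the crux lies: in the degenerate case $\core_G(U)=\core_V(H)=\core_G(H)=1$ you only reformulate what must be proved --- that $[N_G(U):U]=[N_V(H):H]$, equivalently that $U$ and $H$ fix the same points of $G/U$ --- and then defer it to an unspecified computation inside $A$ ``because $I$ and $J$ permute.'' No such computation is given, and one should be skeptical that a direct index computation exists: as the remark after Proposition~\ref{MonProp} notes, it is not even known whether this degenerate case can occur when $A$ is merely quasi-Hamiltonian, so the equality cannot be expected to fall out of the subgroup lattice of $A$ by general manipulation. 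Your one-way injections cannot close this case on their own, since they go diagonally in only one direction for each pair.

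The paper settles the degenerate case by a different and decisive argument: it shows both relevant automorphism groups are \emph{trivial}, so the two normalizer indices are equal because both equal $1$. The two ingredients you are missing are (i) Corollary~\ref{gamma.Ind} (in chain language: if $\Aut(\theta_2)\cong N_G(U)/U$ were nontrivial, maximality of $U$ would give $N_G(U)=G$, so $U\trianglelefteq G$ and $\core_G(U)=U\ne 1$, a contradiction; similarly $\Aut(\hat\theta_1)\cong N_V(H)/H\ne 1$ would force $H=\core_V(H)=1$); and (ii) Ore's theorem that every maximal proper subgroup of a quasi-Hamiltonian group is normal, which is what rules out the residual possibility $H=1$: in that event $G=HA=A$ is itself quasi-Hamiltonian and $U$ is a maximal proper subgroup of it, hence normal, again contradicting $\core_G(U)=1$. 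This is precisely where the quasi-Hamiltonian hypothesis enters ``in full,'' and it enters through a structural normality statement rather than through the index bookkeeping you propose. With both groups trivial, and $[G:U]=[V:H]$ from $UV=G$ and $U\cap V=H$, the diagonal matching of degrees and automorphism groups follows, completing the proof.
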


We begin with the following simple result.

\begin{lemma}\label{symlem}
If $\phi\colon C\to D$ is a cover of curves over a field $K$, then we can write $\phi=\phi_2\circ\phi_1$ where 
$\phi_2\colon C_1\to D$ and $\phi_1\colon C\to C_1$ are covers of curves over $K$ such that
$K(C)/K(C_1)$ is Galois with Galois group $\Aut(\phi)$.
For any expression $\phi_2=\psi_2\circ\psi_1$ as the composition of two covers, we have
$\Aut(\phi)=\Aut(\psi_1\circ\phi_1)$.  Finally,
$\Aut(\phi)\cong N_G(H)/H$, where $G$ is the monodromy group
of $\phi$ and $H$ is a one-point stabilizer of $G$.
\end{lemma}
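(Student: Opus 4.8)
The plan is to translate everything into function fields and then read off all three assertions from the single observation that the fixed field of $\Aut(\phi)$ is the function field of the intermediate curve $C_1$. The curve/function-field dictionary used in the proof of Lemma~\ref{Decr.Incr} identifies an automorphism $\sigma$ of $C$ over $K$ with its pullback on $K(C)$, and turns the condition $\phi\circ\sigma=\phi$ into the condition that this pullback fixes $K(D)$ pointwise. Composing with inversion to account for the reversal of composition, this yields a group isomorphism $\Aut(\phi)\cong\Aut(K(C)/K(D))$, where $\Aut(K(C)/K(D))$ denotes the group of $K(D)$-automorphisms of the field $K(C)$; this group is finite of order dividing $[K(C):K(D)]=\deg\phi$.

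First I would prove the existence statement. Let $K(C_1)$ be the fixed field of $\Aut(K(C)/K(D))$ acting on $K(C)$. Since each such automorphism fixes $K(D)$ pointwise we have $K(D)\subseteq K(C_1)\subseteq K(C)$, and Artin's theorem shows that $K(C)/K(C_1)$ is Galois with Galois group $\Aut(K(C)/K(D))\cong\Aut(\phi)$. To see that $K(C_1)$ is the function field of a curve over $K$, note that it is finitely generated of transcendence degree $1$ over $K$, and that $K$ is separably closed in $K(C)$ (as $C$ is geometrically irreducible), hence in the subfield $K(C_1)$, so $K(C_1)$ corresponds to a geometrically irreducible curve $C_1$; the two inclusions then give separable covers $\phi_1\colon C\to C_1$ and $\phi_2\colon C_1\to D$ with $\phi=\phi_2\circ\phi_1$, as desired.

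Next I would treat the second assertion. Given any factorization $\phi_2=\psi_2\circ\psi_1$ with $\psi_1\colon C_1\to B$ and $\psi_2\colon B\to D$, the dictionary produces the chain $K(D)\subseteq K(B)\subseteq K(C_1)\subseteq K(C)$, and $\Aut(\psi_1\circ\phi_1)$ corresponds to the group $\Aut(K(C)/K(B))$ of $K(B)$-automorphisms of $K(C)$. The inclusion $K(D)\subseteq K(B)$ gives $\Aut(K(C)/K(B))\subseteq\Aut(K(C)/K(D))$, while the inclusion $K(B)\subseteq K(C_1)$ into the fixed field shows that every element of $\Aut(K(C)/K(D))$ fixes $K(B)$ pointwise, giving the reverse inclusion. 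Hence $\Aut(K(C)/K(B))=\Aut(K(C)/K(D))$, i.e.\ $\Aut(\psi_1\circ\phi_1)=\Aut(\phi)$.

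Finally, the identification $\Aut(\phi)\cong N_G(H)/H$ is the standard normalizer-quotient description of the automorphism group of a separable extension via its Galois closure. Letting $\Omega$ be the Galois closure of $K(C)/K(D)$, $G=\Gal(\Omega/K(D))$, and $\tilde H=\Gal(\Omega/K(C))$, normality of $\Omega/K(D)$ lets every $K(D)$-automorphism of $K(C)$ extend to an element of $G$, and such an element normalizes $\tilde H$ precisely because it carries $K(C)$ to itself; restriction to $K(C)$ then defines a surjective homomorphism $N_G(\tilde H)\to\Aut(K(C)/K(D))$ whose kernel is $\{g\in N_G(\tilde H):g|_{K(C)}=\mathrm{id}\}=N_G(\tilde H)\cap\tilde H=\tilde H$. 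Thus $\Aut(\phi)\cong N_G(\tilde H)/\tilde H$, and since $H$ is conjugate to $\tilde H$ in $G$ this equals $N_G(H)/H$. I expect no deep obstacle here; the points requiring genuine care are the verification that $C_1$ is a curve over $K$ (geometric irreducibility of the fixed field and separability of the two subextensions) and the extension/restriction bookkeeping in the last step, both of which are routine once the function-field translation is in place.
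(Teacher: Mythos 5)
Your proof is correct and follows essentially the same route as the paper's: identify $\Aut(\phi)$ with the group of $K(D)$-automorphisms of $K(C)$, take its fixed field (the paper's maximal Galois subextension $L$) to construct $\phi_1$ and $\phi_2$ and to get the second assertion, and obtain $\Aut(\phi)\cong N_G(\tilde H)/\tilde H\cong N_G(H)/H$ via the Galois closure and conjugacy of one-point stabilizers. The only difference is that you spell out details the paper leaves implicit (Artin's theorem, geometric irreducibility and separability for $C_1$, and the extension/restriction homomorphism in the normalizer step), all of which are handled correctly.
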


\begin{proof}
Via the standard equivalence of categories
between curves and function fields, we see that $\Aut(\phi)$ is isomorphic to the group
of automorphisms of the function field $K(C)$ which act as the identity on $K(D)$.  In other
words, $\Aut(\phi)$ is the Galois group of the largest Galois extension $K(C)/L$ where $L$
is a field between $K(D)$ and $K(C)$.  Now let $\Omega$ be the Galois closure of $K(C)/K(D)$,
and write $G:=\Gal(\Omega/K(D))$ and $\tilde{H}:=\Gal(\Omega/K(C))$.  Then $\Aut(\phi)\cong
\Gal(K(C)/L)\cong N_G(\tilde{H})/\tilde{H}$.  Since $G$ is transitive, any one-point stabilizer $H$
of $G$ is conjugate to $\tilde{H}$ in $G$, so that $N_G(H)/H\cong N_G(\tilde{H})/\tilde{H}$.
Finally, let $\phi_2\colon C_1\to D$ and $\phi_1\colon C\to C_1$ be covers of curves over $K$
which correspond to the field extensions $L/K(D)$ and $K(C)/L$; then $\phi=\phi_2\circ\phi_1$
and $K(C)/L$ is Galois with Galois group $\Aut(\phi)=\Aut(\phi_1)=\Aut(\psi_1\circ\phi_1)$, as required.
\end{proof}

We now record an immediate geometric reformulation of the condition that $K(C)/K(C_1)$ is
Galois in the above result.

\begin{lemma}\label{fiber}
Let $\phi\colon C\to D$ be a cover of curves over a field $K$.  Then the function field extension
 $K(C)/K(D)$ is Galois if and only if the irreducible components of the fibered product $C\times_D C$
 are precisely the graphs of the functions $\nu\colon C\to C$ for $\nu\in\Aut(\phi)$.
\end{lemma}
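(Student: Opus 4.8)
The plan is to translate the geometric statement into the algebra of the two function fields and reduce it to the standard criterion that a finite separable extension $E/F$ is Galois exactly when $\lvert\Aut(E/F)\rvert=[E:F]$. Write $F:=K(D)$ and $E:=K(C)$; since $\phi$ is a cover, $E/F$ is finite and separable. First I would describe the components of $C\times_D C$ algebraically. The first projection $p_1\colon C\times_D C\to C$ is the base change of $\phi$ along $\phi$, so it is finite of degree $\deg\phi$, and because $\phi$ is a (flat, separable) cover between smooth curves, $C\times_D C$ is a reduced curve, finite over $C$ via $p_1$. Its irreducible components therefore correspond bijectively to the minimal primes of the fibre algebra $E\otimes_F E$; since this algebra is finite-dimensional and reduced over $E$, it is a finite product of field extensions $E\otimes_F E\cong\prod_{j=1}^r L_j$ with $\sum_{j=1}^r[L_j:E]=[E:F]=\deg\phi$, and the component indexed by $j$ has function field $L_j$, with $p_1$ inducing the inclusion of the first tensor factor $E\hookrightarrow L_j$.

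Next I would identify which components are graphs. The standard equivalence of categories identifies $\Aut(\phi)$ with the group $\Aut(E/F)$ of $F$-automorphisms of $E$ via $\nu\mapsto\nu^*$ (cf.\ the proof of Lemma~\ref{symlem}). For $\nu\in\Aut(\phi)$ the graph $\Gamma_\nu=\{(x,\nu(x))\}$ lies in $C\times_D C$ precisely because $\phi\circ\nu=\phi$; since $p_1$ restricts to an isomorphism $\Gamma_\nu\to C$ and $\Gamma_\nu$ is one-dimensional, $\Gamma_\nu$ is an irreducible component, and its factor is the copy of $E$ coming from the map $E\otimes_F E\to E$, $a\otimes b\mapsto a\,\nu^*(b)$. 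Conversely, a component is the graph of some $\nu\in\Aut(\phi)$ exactly when its factor $L_j$ is $E$-isomorphic to $E$ via $p_1$: in that case $p_1$ restricts to an isomorphism of the component onto $C$, and $\nu:=p_2\circ(p_1\vert_{\text{comp}})^{-1}$ is a morphism $C\to C$ with $\phi\circ\nu=\phi$ whose induced $\nu^*$ is an injective, hence bijective, $F$-endomorphism of $E$, forcing $\nu\in\Aut(\phi)$. Thus the graphs of automorphisms are precisely the components whose factor is isomorphic to $E$, and these biject with $\Aut(E/F)\cong\Aut(\phi)$.

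Finally I would assemble the equivalence by a dimension count. Since every $\Gamma_\nu$ is already a component, the statement ``the components are precisely the graphs'' is equivalent to ``every factor $L_j$ is isomorphic to $E$''. If this holds then $r=\lvert\Aut(E/F)\rvert$, and the identity $\sum_j[L_j:E]=[E:F]$ forces $\lvert\Aut(E/F)\rvert=[E:F]$, so $E/F$ is Galois. Conversely, if $E/F$ is Galois then $\lvert\Aut(E/F)\rvert=[E:F]=\deg\phi$, producing $\deg\phi$ distinct graphs, each a component with $[L_j:E]=1$; these already exhaust the total dimension $\deg\phi$, leaving no room for any further factor, so every component is a graph. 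The step I expect to be the main obstacle is the first one: pinning down the bijection between the irreducible components of $C\times_D C$ and the field factors of $E\otimes_F E$ — in particular invoking separability of $\phi$ to guarantee that $E\otimes_F E$ is reduced, so that components match factors without multiplicities and no spurious lower-dimensional components intervene — together with the verification that a component on which $p_1$ is birational must be the graph of a genuine element of $\Aut(\phi)$.
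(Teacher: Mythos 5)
Your proof is correct. Note that the paper offers no proof of Lemma~\ref{fiber} at all: it is introduced as an ``immediate geometric reformulation'' of the Galois condition, with all details left to the reader. Your argument --- identifying the irreducible components of $C\times_D C$ with the field factors of $E\otimes_F E$ (reduced because $\phi$ is separable), observing that graphs of elements of $\Aut(\phi)$ are exactly the components whose factor is $E$-isomorphic to $E$ via the first projection, and then invoking the criterion that $E/F$ is Galois if and only if $\lvert\Aut(E/F)\rvert=[E:F]$ --- is precisely the standard justification the authors presumably had in mind, so you have filled in what they deemed immediate rather than taken a different route. Two points you use implicitly deserve a sentence in a final writeup: first, every irreducible component of $C\times_D C$ dominates $C$ under $p_1$ (this follows from flatness of $p_1$, by going-down), which is what guarantees that components biject with the minimal primes of the generic fibre $E\otimes_F E$ and that no lower-dimensional components exist; second, the passage from a component whose factor has degree one to a genuine morphism $\nu\colon C\to C$ uses that a finite birational morphism onto the nonsingular (hence normal) curve $C$ is an isomorphism. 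Both facts are standard and both hold here, so the proof stands as written.
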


\begin{remark} \label{faccore}
In the geomeric setting, one says that $\phi$ is Galois when $K(C)/K(D)$ is Galois.
In the algebraic setting, where $\phi$ is a polynomial $f(X)$, the above result
says (for $x$ transcendental over $K$) that $K(x)/K(f(x))$ is Galois if and only if $f(X)-f(Y)$ is a
constant times the product of all $X-\nu(Y)$ where $\nu\in K[X]$ satisfies $f\circ\nu=f$.
Polynomials with this property are called ``factorable" \cite{Cohenfac}, and if $\phi$ is a polynomial
then the polynomial
playing the role of $\phi_1$ in Lemma~\ref{symlem} is called the ``factorable core" of $\phi$.
\end{remark}

Next we show that indecomposable covers with nontrivial automorphism groups are highly restricted.

\begin{corollary}\label{gamma.Ind}
If $\phi\colon C\to D$ is an indecomposable cover of curves over a field $K$, then the following are equivalent:
\begin{enumerate}
\item $\deg\phi$ is prime and both $\Aut(\phi)$ and $\Mon(\phi)$ are cyclic of order $\deg \phi$
\item $\Mon(\phi)$ is abelian
\item $\Mon(\phi)$ is regular
\item $\lvert\Aut(\phi)\rvert>1$.
\end{enumerate}
\end{corollary}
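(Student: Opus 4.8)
The plan is to translate the statement into pure group theory and then verify the equivalences among the resulting subgroup conditions. Write $G:=\Mon(\phi)$ and let $H$ be a one-point stabilizer, so that $d:=\deg\phi=[G:H]$ and, since $\Mon(\phi)$ is by definition a \emph{faithful} permutation group, $\core_G(H)=1$. By Lemma~\ref{Decr.Incr} the equivalence classes of decompositions of $\phi$ correspond to chains of groups between $H$ and $G$; a decomposition into two covers of degree at least $2$ corresponds to a group strictly between $H$ and $G$, so $\phi$ being indecomposable is equivalent to $H$ being a maximal subgroup of $G$ (with $H\ne G$ since $d\ge 2$). Finally, Lemma~\ref{symlem} gives $\Aut(\phi)\cong N_G(H)/H$. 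Under this dictionary the four conditions become: (1) $d$ is prime and $G$ and $N_G(H)/H$ are cyclic of order $d$; (2) $G$ is abelian; (3) $G$ is regular, i.e.\ $H=1$; and (4) $N_G(H)\ne H$.

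I would then prove the cycle $(1)\Rightarrow(2)\Rightarrow(3)\Rightarrow(1)$ and afterwards fold in $(4)$. The implication $(1)\Rightarrow(2)$ is immediate, since cyclic groups are abelian. For $(2)\Rightarrow(3)$, if $G$ is abelian then every subgroup is normal, so $H=\core_G(H)=1$; hence $\abs{G}=[G:H]=d$ and $G$ is regular. For $(3)\Rightarrow(1)$, regularity gives $H=1$, and indecomposability says $1$ is maximal in $G$, so $G$ has no proper nontrivial subgroup and is therefore cyclic of prime order $d$; then $N_G(H)/H=G$ is cyclic of order $d$, which is exactly (1).

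It remains to incorporate $(4)$. The implication $(1)\Rightarrow(4)$ is clear, as $N_G(H)/H=G$ has order $d>1$. For $(4)\Rightarrow(3)$, observe that $H\le N_G(H)\le G$ together with maximality of $H$ forces $N_G(H)\in\{H,G\}$; since $(4)$ rules out $N_G(H)=H$, we get $N_G(H)=G$, i.e.\ $H$ is normal in $G$, whence $H=\core_G(H)=1$ and $G$ is regular. This closes all the equivalences. None of the individual steps is deep; the only real content is assembling the correct dictionary, in particular recognizing that indecomposability means maximality of $H$, that faithfulness of the monodromy action means $\core_G(H)=1$, and that a transitive abelian group is automatically regular. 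The one place to take some care is the self-normalizing dichotomy in $(4)\Rightarrow(3)$, where maximality of $H$ is precisely what converts the hypothesis $N_G(H)\ne H$ into normality of $H$.
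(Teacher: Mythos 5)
Your proof is correct and takes essentially the same route as the paper's: both translate the statement through Lemma~\ref{Decr.Incr} (indecomposability of $\phi$ equals maximality of $H$ in $G$) and Lemma~\ref{symlem} ($\Aut(\phi)\cong N_G(H)/H$), and both hinge on faithfulness of the monodromy action ($\core_G(H)=1$) together with the dichotomy $N_G(H)\in\{H,G\}$ forced by maximality. The differences are only organizational: you run the implications in a different cyclic order and phrase every step in group-theoretic terms, whereas the paper phrases (2)$\Rightarrow$(3) and (4)$\Rightarrow$(1) via the Galois closure of $K(C)/K(D)$ --- the underlying content is identical.
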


\begin{proof}
We show that (1)$\implies$(2)$\implies$(3)$\implies$(4)$\implies$(1).  The first implication
is immediate.  Now let $\Omega$ be the Galois closure of $K(C)/K(D)$, and put
$H:=\Gal(\Omega/K(C))$.
If (2) holds then $H$ is a normal subgroup of $\Mon(\phi)$, so that
$K(C)/K(D)$ is Galois and thus $\Omega=K(C)$, whence $H=1$ so (3) holds.
If (3) holds then Lemma~\ref{symlem} implies that $\Aut(\phi)\cong\Mon(\phi)$ is nontrivial.
Finally, suppose that (4) holds.  Write $\phi=\phi_2\circ\phi_1$ as in Lemma~\ref{symlem}.
Since $\phi$ is indecomposable and $\deg\phi_1=\lvert\Aut(\phi)\rvert>1$, we must have
$\deg\phi_1=\deg\phi$.  Therefore $K(C)/K(D)$ is Galois with Galois group $\Aut(\phi)$,
so that $\Mon(\phi)\cong\Gal(K(C)/K(D))\cong\Aut(\phi)$.  Since $\phi$ is indecomposable,
Lemma~\ref{Decr.Incr} implies that $\Mon(\phi)$ has no nontrivial proper subgroups, so that
$\Mon(\phi)$ must have prime order.
\end{proof}

\begin{proof}[Proof of Theorem~\ref{tard}]
By Theorem~\ref{RittFirst} it suffices to prove the result when $n=2$.
Assuming $n=2$, let $H$ be a one-point stabilizer in $G:=\Mon(\phi)$, and let $A$ be a transitive
quasi-Hamiltonian subgroup of $G$.  Let $U$ and $V$ be groups between $H$ and $G$ which
correspond to $\phi_1$ and $\psi_1$ via Lemma~\ref{Decr.Incr}.  We assume $U\ne V$, since otherwise
the conclusion is immediate.  By symmetry, it suffices to show that $\Aut(\phi_2)\cong\Aut(\psi_1)$
and $\deg\phi_2=\deg\psi_1$.  
By Lemma~\ref{symlem}, this holds if $\Mon(\phi_2)\cong\Mon(\psi_1)$, which is the case unless
$\core_G(U)=\core_V(H)=\core_G(H)$ (by Lemmas~\ref{coremon} and \ref{MonLemma}).
Since $G$ is the Galois group of a field extension, in particular it is a faithful permutation group,
so that $\core_G(H)=1$.  Henceforth assume that $\core_G(U)=\core_V(H)=1$.
Therefore $U$ is not normal in $G$,
so Corollary~\ref{gamma.Ind} implies that $\Aut(\phi_2)=1$.  
If $\Aut(\psi_1)\ne 1$ then Corollary~\ref{gamma.Ind} implies that $H=1$, so $G=HA=A$ and thus
$U$ is a maximal proper subgroup of $A$.
This contradicts Ore's  result \cite[Thm.~5]{Oregroup} that every maximal proper subgroup of a 
quasi-Hamiltonian group is normal, so in fact $\Aut(\psi_1)=1$.  Finally,  maximality and distinctness
of the chains forces $UV=G$ and $U\cap V=H$, so that
$[G:U]=[V:H]$ and thus $\deg\phi_2=\deg\psi_1$.
\end{proof}

\begin{remark}
In the above proof we used Ore's result that every maximal proper subgroup of a quasi-Hamiltonian group
is normal.  In other words, we used a portion of the defining property of Dedekind groups which
remains true for the larger class of quasi-Hamiltonian groups.  It would be interesting to generalize
Theorem~\ref{tard} by replacing quasi-Hamiltonian groups by an even larger class of groups.
\end{remark}

\begin{remark}
Since Ore's proof is quick, we include it here for the reader's convenience.  If $U$ is a non-normal
maximal subgroup of a quasi-Hamiltonian group $G$ then $U$ has a conjugate $V\ne U$, and plainly
$V$ cannot be a subgroup of $U$.
Therefore $UV$ is a group which strictly contains $U$, so maximality implies
that $UV=G$, whence $V=v^{-1}Uv$ for some $v\in V$, yielding the contradiction $V\ne U=vVv^{-1}=V$.
\end{remark}

In the remainder of this section we describe some ways in which the concept of the automorphism
group of a cover has arisen (in the special case of polynomials or rational functions)
in the literature on complex dynamics and value sets of polynomials over finite fields.
We note that previous authors have had to work much harder in order to prove some of the above results
in the case of polynomials, and their proofs do not extend to treat more general covers.
  For $f(X)\in K(X)$, the group $\Aut(f)$ consists of the degree-one rational
functions $\mu(X)\in K(X)$ for which $f\circ\mu=f$; if $f(X)\in K[X]\setminus K$ then every element of
 $\Aut(f)$ must permute the set $f^{-1}(\infty)=\{\infty\}$, and hence must be a degree-one polynomial.

In case $f(X)\in\C[X]$ has degree $n\ge 2$, we write $\Gamma(f)$ for the group of degree-one
$\mu\in\C[X]$ for which there exists a degree-one $\nu\in\C[X]$ satisfying $\nu\circ f\circ\mu = f$.
One can easily check that $\Gamma(f)$ is infinite if and only if $f(X)$ is conjugate to $X^n$, and
that if $\Gamma(f)$ is finite then it is cyclic of order $m$; here, for any degree-one $\theta\in\C[X]$
such that $\hat f:=\theta^{-1}\circ f\circ\theta$ has no term of degree $n-1$, the number $m$ is the greatest
common divisor of the collection of all differences between degrees of pairs of terms of $\hat f$.
In case $\Gamma(f)$ is finite, it coincides with the group of symmetries of the Julia set of $f(X)$
\cite{Beardonsymm}.   The paper \cite{BNg} studies the subgroup $\Aut(f)$ of $\Gamma(f)$, and
gives a lengthy proof of Theorem~\ref{tard} in the special case of complex polynomials by making use
of the much more difficult ``Ritt's Second Theorem" (which should not be confused with Ritt's First Theorem
as discussed in Remark~\ref{Rittrem}).  In a subsequent paper we will discuss the analogue
of $\Gamma(f)$ for arbitrary covers of curves (over an arbitrary field), and in the case of rational
functions we will discuss the union of the groups $\Gamma(f^{\circ n})$ where $f^{\circ n}$ denotes
the $n$-th iterate of $f(X)$.  Some results in this direction appear in \cite{Levin}.

Now consider $f(X)\in K[X]\setminus K[X^p]$, where $K$ is a field of characteristic $p\ge 0$.
As noted in Remark~\ref{faccore}, we can write $f=g\circ h$ where $g,h\in K[X]$ and
$K(x)/K(h(x))$ is Galois with Galois group $\Aut(f)$.
  Now $\Aut(f)$ consists of all degree-one $\mu\in K[X]$ for which $X-\mu(Y)$ divides 
$f(X)-f(Y)$, so in particular $h(X)-h(Y)$ is the product of degree-one polynomials in $K[X,Y]$.
Together with the known list of
 possibilities for $h(X)$  \cite[Thm.~1]{TS68}, this implies the main results of \cite{Cohenfac}.
The elements of $\Aut(f)$ play a prominent role in the study of the image set $f(K)$ where $K$ is
a finite field.  This classical topic has a rich tradition, with important contributions by
Betti, Mathieu, Hermite, Dickson, Schur, Davenport, Carlitz, Birch, Swinnerton-Dyer, Mordell, Bombieri,
and many others.  The group $\Aut(f)$ plays an especially fundamental role in case the ratio
$\lvert f(K)\rvert/\lvert K\rvert$ is either
large \cite{Hayes, Lenstra} or small \cite{GCM,Mitkin}, and also arises in other well-behaved
cases \cite{Cohenvalset,Williamsextreme}.  See \cite{MullenZ} for a recent survey on this topic.


\section{A divisibility property of automorphism groups of subcovers} \label{secdiv}

In this section we prove the following divisibility result.

\begin{theorem}\label{ThmGS}
Let $\phi$ be a cover of curves over a field $K$, and assume that the monodromy group
$\Mon(\phi)$ has a transitive Dedekind subgroup.
Let $\phi=\phi_n\circ\phi_{n-1}\circ\dots\circ\phi_1$ be a decomposition of $\phi$,
and for each $i=1,2,\dots,n$ write $\psi_i:=\phi_i\circ\phi_{i-1}\circ\dots\circ\phi_1$.
Then for $1\le i<n$ the group $\Aut(\psi_i)$ is a normal subgroup of $\Aut(\psi_{i+1})$,
and the quotient group is isomorphic to a subgroup of $\Aut(\phi_{i+1})$.
In particular, 
\[
\lvert\Aut(\phi)\rvert \text{ divides }\prod_{i=1}^n\lvert\Aut(\phi_i)\rvert.
\]
\end{theorem}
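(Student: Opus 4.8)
The plan is to apply Theorem~\ref{GS2intro} to each of the two-step factorizations $\psi_{i+1}=\phi_{i+1}\circ\psi_i$, taking $\theta:=\phi_{i+1}$ and $\rho:=\psi_i$, so that $\theta\circ\rho=\psi_{i+1}$. Granting that its hypothesis holds, Theorem~\ref{GS2intro} furnishes a homomorphism $\Aut(\psi_{i+1})\to\Aut(\phi_{i+1})$ with kernel $\Aut(\psi_i)$. This is exactly the asserted structural statement: the kernel $\Aut(\psi_i)$ is normal in $\Aut(\psi_{i+1})$, and $\Aut(\psi_{i+1})/\Aut(\psi_i)$ is isomorphic to the image of the homomorphism, hence to a subgroup of $\Aut(\phi_{i+1})$.

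The only thing requiring verification is the hypothesis of Theorem~\ref{GS2intro}, namely that $\Mon(\psi_{i+1})$ has a transitive Dedekind subgroup; this is the crux of the argument. I would prove the general principle that a transitive Dedekind subgroup of $\Mon(\phi)$ descends to every cover through which $\phi$ factors on the right. Concretely, let $\Omega$ be the Galois closure of $K(C)/K(D)$, write $G:=\Gal(\Omega/K(D))=\Mon(\phi)$ with transitive Dedekind subgroup $A$ and one-point stabilizer $H:=\Gal(\Omega/K(C))$, and set $G_{i+1}:=\Gal(\Omega/K(C_{i+1}))$, a group satisfying $H\le G_{i+1}\le G$. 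By Lemma~\ref{G1} we have $G_{i+1}=(G_{i+1}\cap A)H$, so $G_{i+1}\cap A$ acts transitively on the cosets $G_{i+1}/H$. By Lemma~\ref{coremon}, $\Mon(\psi_{i+1})$ is the permutation group obtained from the action of $G_{i+1}$ on $G_{i+1}/H$, and the image of $G_{i+1}\cap A$ therein is transitive. Moreover this image is Dedekind, being a quotient of the subgroup $G_{i+1}\cap A$ of $A$ (every subgroup and every quotient of a Dedekind group is again Dedekind). Hence $\Mon(\psi_{i+1})$ has a transitive Dedekind subgroup, as needed.

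With the hypothesis in hand, the divisibility follows by telescoping. Writing $q_i:=\lvert\Aut(\psi_{i+1})\rvert/\lvert\Aut(\psi_i)\rvert$, Lagrange's theorem together with the embedding of the quotient into $\Aut(\phi_{i+1})$ shows that $q_i$ divides $\lvert\Aut(\phi_{i+1})\rvert$, while by construction $\lvert\Aut(\psi_{i+1})\rvert=\lvert\Aut(\psi_i)\rvert\,q_i$. Since $\psi_1=\phi_1$ and $\psi_n=\phi$, iterating gives
\[
\lvert\Aut(\phi)\rvert=\lvert\Aut(\phi_1)\rvert\prod_{i=1}^{n-1}q_i,
\]
and because each $q_i$ divides $\lvert\Aut(\phi_{i+1})\rvert$, the right-hand side divides $\prod_{i=1}^{n}\lvert\Aut(\phi_i)\rvert$, which is the claim.

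The main obstacle is the descent step in the second paragraph: one must check carefully that the identification of $\Mon(\psi_{i+1})$ via Lemma~\ref{coremon} is compatible with passing to the subgroup $A$, and that Lemma~\ref{G1} does yield transitivity of $G_{i+1}\cap A$ on $G_{i+1}/H$. Once this descent is established, the remaining steps are a direct appeal to Theorem~\ref{GS2intro} and elementary index bookkeeping.
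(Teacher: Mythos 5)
Your proposal is correct and follows essentially the same route as the paper: the paper's proof likewise applies Proposition~\ref{GS2} (the body version of Theorem~\ref{GS2intro}) to $(\psi,\theta,\rho)=(\psi_{i+1},\phi_{i+1},\psi_i)$ and then telescopes the resulting indices. The only difference is that the paper asserts in a single sentence that $\Mon(\psi_{i+1})$ inherits a transitive Dedekind subgroup, whereas you verify this descent explicitly via Lemmas~\ref{G1} and~\ref{coremon}; your verification is correct.
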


The crucial case in the proof of Theorem~\ref{ThmGS} is contained in the following result
which is of independent interest.

\begin{prop}\label{GS2}
Let $\theta\colon C_1\to D$ and $\rho\colon C\to C_1$  be
covers of curves over $K$ for which the monodromy group of $\psi:=\theta\circ\rho$ has a
transitive Dedekind subgroup $A$.
Then, for each $\mu\in\Aut(\psi)$, there is a unique $\nu\in\Aut(\theta)$ for which
$\rho\circ\mu=\nu\circ\rho$.  Moreover, the map $\mu\mapsto\nu$ defines a homomorphism
$\Aut(\psi)\to\Aut(\theta)$ with kernel $\Aut(\rho)$.
\end{prop}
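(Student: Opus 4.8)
My plan is to pass to the Galois-theoretic picture of Lemma~\ref{Decr.Incr} and reduce the proposition to a single group-theoretic claim. Let $\Omega$ be the Galois closure of $K(C)/K(D)$, set $G:=\Mon(\psi)=\Gal(\Omega/K(D))$, let $H:=\Gal(\Omega/K(C))$ be the associated one-point stabilizer, and let $U:=\Gal(\Omega/K(C_1))$, so that $H\le U\le G$ and the intermediate field $K(C_1)$ is the fixed field $\Omega^U$. As in Lemma~\ref{symlem}, $\Aut(\psi)$ is the group of automorphisms of $K(C)/K(D)$, and each such automorphism extends to an element $\tilde\mu\in N_G(H)$ that is well defined modulo $H$.

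First I would dispatch everything except existence of $\nu$, since those parts do not use the hypothesis on $A$. Uniqueness is immediate because $\rho$ is a nonconstant morphism of curves, hence surjective, so $\nu\circ\rho=\nu'\circ\rho$ forces $\nu=\nu'$; and once $\nu$ exists it lies in $\Aut(\theta)$ automatically, since $\theta\circ\nu\circ\rho=\theta\circ\rho\circ\mu=\psi\circ\mu=\psi=\theta\circ\rho$ and $\rho$ is surjective. The map $\mu\mapsto\nu$ is then a homomorphism by applying uniqueness to $\rho\circ(\mu_1\circ\mu_2)=\nu_1\circ\nu_2\circ\rho$, and its kernel is $\{\mu:\rho\circ\mu=\rho\}=\Aut(\rho)$ (one checks $\Aut(\rho)\subseteq\Aut(\psi)$ directly, since $\rho\circ\mu=\rho$ gives $\psi\circ\mu=\theta\circ\rho\circ\mu=\theta\circ\rho=\psi$).

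The substance is the existence of $\nu$. Pulling back to function fields, $\mu$ descends through $\rho$ exactly when $\tilde\mu$ carries the subfield $\Omega^U=K(C_1)$ into itself, i.e.\ exactly when $\tilde\mu\in N_G(U)$; so it suffices to prove the group-theoretic statement $N_G(H)\le N_G(U)$. This is where the Dedekind hypothesis must enter, and I expect it to be the crux. The idea is to prove $U=H\,\core_G(U)$. Transitivity of $A$ gives $G=HA$, hence $G=UA$, and therefore
\[
\core_G(U)=\bigcap_{g\in G}U^g=\bigcap_{g\in A}U^g\ge\bigcap_{g\in A}(U\cap A)^g=\core_A(U\cap A)=U\cap A,
\]
the last equality holding because every subgroup of the Dedekind group $A$ is normal in $A$; this is exactly the computation already appearing in the proof of Proposition~\ref{MonProp}. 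Consequently $U=H(U\cap A)\le H\,\core_G(U)\le U$, so $U=H\,\core_G(U)$ as claimed.

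Granting this, existence falls out cleanly: writing $N:=\core_G(U)$, which is normal in $G$, any $g\in N_G(H)$ satisfies $gUg^{-1}=(gHg^{-1})(gNg^{-1})=HN=U$, so $g\in N_G(U)$, and the desired $\nu$ is $\tilde\mu|_{K(C_1)}$. The only real obstacle is the passage $U=H\,\core_G(U)$, which rests on every subgroup of $A$ being normal in $A$; the quasi-Hamiltonian hypothesis alone would not obviously deliver this, which is the reason this proposition (unlike Theorem~\ref{RittFirst}) requires $A$ to be Dedekind.
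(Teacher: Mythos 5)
Your proof is correct, and it isolates the same group-theoretic crux as the paper --- that $N_G(H)$ normalizes $U$ --- but it both reaches that fact and exploits it differently. For the crux itself, the paper writes $N_G(H)=HJ$ and $U=HJ_1$ with $J:=N_G(H)\cap A$ and $J_1:=U\cap A$ (via Lemma~\ref{G1}), and notes that $J$ normalizes both $H$ and $J_1$ (the latter because $A$ is Dedekind), hence normalizes $U$; you instead recycle the core computation from Proposition~\ref{MonProp} to get $\core_G(U)\ge U\cap A$, deduce $U=H\,\core_G(U)$, and conjugate by elements of $N_G(H)$. Both arguments rest on Lemma~\ref{G1} plus normality of subgroups of the Dedekind group $A$, so this half is a variant rather than a new idea. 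The genuine divergence is the descent step: you finish purely field-theoretically, extending $\mu^*$ to $\tilde\mu\in N_G(H)\le N_G(U)$ and restricting to $K(C_1)=\Omega^U$, which $\tilde\mu$ preserves. The paper instead forms the auxiliary decomposition $\theta=\theta_2\circ\theta_1$ corresponding to the chain $G\ge N_G(H)U\ge U$, observes that $K(C_1)/K(E)$ is Galois, and extracts $\nu$ geometrically: by Lemma~\ref{fiber} the irreducible components of $C_1\times_E C_1$ are graphs of elements of $\Aut(\theta_1)$, and since Lemma~\ref{symlem} gives $\theta_1\circ\rho\circ\mu=\theta_1\circ\rho$, the morphism $P\mapsto(\rho\circ\mu(P),\rho(P))$ lands in that fibered product and its image must be such a graph. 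Your route is shorter and avoids Lemmas~\ref{fiber} and~\ref{symlem} altogether; the paper's route buys the slightly sharper conclusion that the homomorphism $\Aut(\psi)\to\Aut(\theta)$ actually takes values in $\Aut(\theta_1)$ for a specific Galois subcover $\theta_1$ of $\theta$, information not visible in your version (though not needed for the statement). Your closing comment is also on target: the remark following Theorem~\ref{ThmGS} exhibits a quasi-Hamiltonian counterexample, so the Dedekind hypothesis is genuinely necessary here, unlike in Theorem~\ref{RittFirst}.
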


\begin{proof}
Let $G>U>H$ be the chain of groups corresponding to the decomposition $\psi=\theta\circ\rho$,
where $H$ is a one-point stabilizer of $G:=\Mon(\psi)$.  Then $J:=N_G(H)\cap A$ and $J_1:=U\cap A$
satisfy $N_G(H)=HJ$ and $U=HJ_1$.
Since $J\le N_G(H)$ and $J$ normalizes $J_1$ (because $A$ is Dedekind), it follows that
$J\le N_G(U)$, so also $N_G(H)=HJ\le N_G(U)$ (since $H\le U$).
Thus $N_G(H)U$ is a group which normalizes $U$.  Let $\theta=\theta_2\circ\theta_1$ be the chain of
covers which corresponds to the chain of groups $G\ge N_G(H)U\ge U$.
Writing $\theta_1$ as $\theta_1\colon C_1\to E$, we see that $K(C_1)/K(E)$ is Galois, so that
$\Aut(\theta_1)\cong\Mon(\theta_1)\cong N_G(H)U/U$ has order equal to $\deg\theta_1$.
By Lemma~\ref{fiber}, the irreducible components of the fibered product $Z:=C_1\times_E C_1$
are the graphs of the functions $\nu\colon C_1\to C_1$ for $\nu\in\Aut(\theta_1)$.
Lemma~\ref{symlem} implies that $\Aut(\psi)=\Aut(\theta_1\circ\rho)$, so for $\mu\in\Aut(\psi)$
we have $\theta_1\circ\rho\circ\mu=\theta_1\circ\rho$.  Therefore the map 
$P\mapsto (\rho\circ\mu(P), \rho(P))$ is a morphism $C_0\to Z$, so its image is a component
of $Z$, whence there is some $\nu\in\Aut(\theta_1)$ for which $\rho\circ\mu=\nu\circ\rho$.
Now it is clear that the map $\mu\mapsto\nu$ is a homomorphism $\Aut(\psi)\to\Aut(\theta_1)$
with kernel $\Aut(\rho)$, which proves the result since $\Aut(\theta_1)\le\Aut(\theta)$.
\end{proof}

\begin{proof}[Proof of Theorem~\ref{ThmGS}]
Since $\Mon(\phi)$ has a transitive Dedekind subgroup, so too does $\Mon(\psi_{i+1})$
for any $i<n$.  Apply Proposition~\ref{GS2} with $(\psi,\theta,\rho)=(\psi_{i+1},\phi_{i+1},\psi_i)$
to conclude that $\Aut(\psi_i)\trianglelefteq\Aut(\psi_{i+1})$ and
$\Aut(\psi_{i+1})/\Aut(\psi_i)$ is isomorphic to a subgroup of $\Aut(\phi_{i+1})$.
It follows that
\[
\lvert\Aut(\phi)\rvert = \lvert\Aut(\psi_1)\rvert\cdot\prod_{i=1}^{n-1} \frac{
\lvert\Aut(\psi_{i+1})\rvert}{\lvert\Aut(\psi_i)\rvert}
\]
divides $\lvert\Aut(\phi_1)\rvert\cdot\prod_{i=1}^{n-1}\lvert\Aut(\phi_{i+1})\rvert$,
which concludes the proof.
\end{proof}

\begin{remark}
Theorem~\ref{ThmGS} would not remain true if we weakened the hypothesis to require
only that $\Mon(\phi)$ contains a transitive quasi-Hamiltonian subgroup.
For, let $G$ be any finite quasi-Hamiltonian group which is not a Dedekind group, and let
$U$ be a non-normal subgroup of $G$.  Let $L/\C(x)$ be a Galois extension with group $G$
(this exists for any choice of $G$, essentially by Riemann's existence theorem).  Then the
chain of fields $\C(x)\subseteq L^U\subseteq L$ corresponds to a chain of covers
$\phi=\phi_2\circ\phi_1$ where $\Aut(\phi_1)\cong U$ and $\Aut(\phi)\cong G$ but
$\Aut(\phi_2)\cong N_G(U)/U$ has order strictly less than $[G:U]$.
\end{remark}

\begin{remark} \label{finalsay}
The final assertion in Theorem~\ref{ThmGS} has been considered previously when $\phi$ is
given by a polynomial $f(X)$.  This assertion was proved in \cite[Thm.~1.2]{BNg} for $f(X)\in\C[X]$,
and in \cite[Thm.~8] {GS06} for $f(X)\in K[X]$ where $K$ is a field and $\charp(K)\nmid\deg f$.
The proofs in those papers apply only to polynomials, and make no mention of monodromy groups.
This difference in perspective perhaps accounts for the conjecture by Gutierrez and 
Sevilla \cite[Conj.~1]{GS06} that the final assertion in Theorem~\ref{ThmGS} is true whenever
$\phi$ is a rational function $f(X)\in K(X)$ over a field $K$ for which $\charp(K)\nmid\deg f$.  One counterexample
is $f(X):=X^3+X^{-3}$ over the field $K=\C$: for $f_2(X):=X^3-3X$ and $f_1(X):=X+X^{-1}$
we have $f=f_2\circ f_1$ but $\lvert\Aut(f)\rvert=6$ does not divide
$\lvert\Aut(f_2)\rvert\cdot\lvert\Aut(f_1)\rvert=1\cdot 2$.  There are many further
counterexamples, and indeed the perspective of the present paper explains why one should not expect
the conjecture to be true.
\end{remark}


\end{document}